\documentclass[a4paper,11pt]{article}

\usepackage{amsmath}
\usepackage{amssymb}
\usepackage{amsthm}
\usepackage{graphicx}
\usepackage{enumerate}
\usepackage[dvipsnames]{xcolor}
\usepackage{hyperref}
\usepackage[pagewise]{lineno}



\newtheorem{theorem}{Theorem}[section]
\newtheorem{proposition}[theorem]{Proposition}

\newtheorem{assumption}{Assumption A}

\theoremstyle{definition}

\numberwithin{equation}{section}


\begin{document}
	
\title{ AN-SPS: Adaptive Sample Size Nonmonotone Line Search Spectral Projected  Subgradient Method for  Convex  Constrained Optimization Problems}
\author{Nata\v sa Krklec Jerinki\' c\footnote{Department of Mathematics and Informatics, Faculty of Sciences, University of Novi Sad, Trg Dositeja Obradovi\' ca 4, 21000 Novi Sad, Serbia. e-mail: \texttt{natasa.krklec@dmi.uns.ac.rs}}, Tijana Ostoji\' c \footnote{Department of Fundamental Sciences, Faculty of Technical Sciences, University of Novi
Sad, Trg Dositeja Obradovi\' ca 6, 21000 Novi Sad, Serbia. e-mail: \texttt{tijana.ostojic@uns.ac.rs}} \footnote{Corresponding author} }
\date{October 10, 2023}
\maketitle

\begin{abstract}
We consider convex optimization problems with a possibly nonsmooth objective function in the form of a mathematical expectation. The proposed framework (AN-SPS) employs Sample Average Approximations (SAA) to approximate the objective function, which is either unavailable or too costly to compute. The sample size is chosen in an adaptive manner, which eventually pushes the SAA error to zero almost surely (a.s.). The search direction is based on a scaled subgradient and a spectral coefficient, both related to the SAA function. The step size is obtained via a nonmonotone line search over a predefined interval, which yields a theoretically sound and practically efficient algorithm. The method retains feasibility by projecting the resulting points onto a feasible set. The a.s. convergence of AN-SPS method is proved without the assumption of a bounded feasible set or bounded iterates. Preliminary numerical results on Hinge loss problems reveal the advantages of the proposed adaptive scheme. In addition, a study of different nonmonotone line search strategies in combination with different spectral coefficients within AN-SPS framework is also conducted, yielding some hints for future work.
\end{abstract}

\textbf{Key words:} 
 Nonsmooth Optimization,  Spectral Projected Gradient,  Sample Average Approximation, Adaptive Variable Sample Size Strategies, Nonmonotone Line Search.

\section{Introduction}\label{sec1}

{\bf{The problem.}} We consider convex constrained optimization problem with the objective function in the form of mathematical expectation, i.e.,  

\begin{equation} 
\min_{x\in\Omega} f(x)=E(\tilde{f}(x,\xi)),
\label{prob1}
\end{equation}
where $\Omega\subset\mathbb{R}^n$ is a convex set,  $\tilde{f}:\mathbb{R}^n \times \mathbb{R}^d \rightarrow \mathbb{R}$ is continuous and convex function with respect to $x$, $\xi:\mathcal{A}\rightarrow \mathbb{R}^d$ is random vector on a probability space $(\mathcal{A}, \mathcal{F}, P)$  and $f$ is continuous and bounded from below on $\Omega$. We assume that it is possible to find an exact projection onto the feasible set, so a typical representative of $\Omega$ is $n$-dimensional ball, nonnegativity constraints, or generic box constraints. We do not impose smoothness of $\tilde{f}$, so we are dealing with nondifferentiable functions $\tilde{f}$ in general.   This framework covers many important optimization problems, \cite{cevher, LSOS26, LSOS27, LSOS38}, such as Hinge loss within a machine learning framework. Moreover, it is known that general constrained optimization problems may be solved through penalty methods, where the relevant subproblems are often transformed into nonnegativity-constrained problems by introducing slack variables or semi-smooth unconstrained problems. Both cases fall into the framework that we consider, provided that the objective function is convex. 

{\bf{Variable sample size schemes.}} The objective function in \eqref{prob1} is usually unavailable or too costly to be evaluated directly \cite{shapiro}. For instance, there are many applications where the analytical form of the mathematical expectation cannot be attained. Moreover, there are also online training problems (e.g., optimization problems that come from time series analysis) where the sample size grows as time goes by. However, even if the sample size is finite and we are dealing with a finite sum problem, working with the full sample throughout the whole optimization process is usually too costly or, moreover,  unnecessarily. This is the reason why  Variable Sample Size (VSS) schemes have been developed over the past few decades overlapping with the Big Data era \cite{BCT, SBNKNKJ, Heur1, HT2, NKNKJ2, Heur3, NKJMM, andrea}, to name just a few.  The idea is to work with Sample Average Approximation (SAA) functions \begin{equation} 
f_{\mathcal{N}}(x)=\frac{1}{N}\sum_{i\in\mathcal{N}}f_i(x),
\label{SAA}
\end{equation}
where $ f_i(x)=\tilde{f}(x,\xi_i) $ and $\xi_i, i=1,2,...$ are usually assumed to be independent and identically distributed (i.i.d.) \cite{shapiro}.
$ N=|\mathcal{N}| $ determines the size of a sample used for the approximation and it is varied across the iterations, allowing cheaper approximations whenever possible.

{\bf{Nonmonotone line search.}} Line search methods are known as a  powerful tool in classical optimization, especially in smooth deterministic case. They provide global convergence with a good practical performance. However, in a stochastic nonsmooth framework, it is very hard to analyze them. In the stochastic case, line search yields biased estimators of the function values in subsequent iteration points, which complicates classical analysis and seeks alternative approaches \cite{LSOS, iusem, kiwiel, KLOS, review}. In the nonsmooth framework, even if strong convexity holds, the lower bounding of the step size is very hard. In \cite{kiwiel}, the steps are bounded from below, but not uniformly since they depend on the tolerance parameter, which tends to zero if convergence towards the optimal point is aimed instead of a nearly-optimal point.  On the other hand, a predefined step size sequence such as the harmonic one is enough to guarantee the (a.s.)  convergence under the standard assumptions \cite{boyd, nedic}, even in the mini-batch or SA (Stochastic Approximation) framework \cite{robbins, spall}.  Unfortunately, this choice usually yields very slow convergence in practice \cite{boyd}. SPS (Spectral Projected Subgradient) framework \cite{nasdrugi} proposes a combination of line search and predefined sequence by performing the line search on predefined intervals, keeping the method both fast and theoretically sound. 

Classical Armijo line search needs descent direction in order to be well defined. While in smooth optimization it is easy to determine it, in the nonsmooth case it is a much more challenging task \cite{kiwiel, kineski}. Moreover, allowing more freedom for the step size selection may be beneficial, especially when the search directions are of spectral type \cite{Birgin1, nasdrugi, loreto2}. Finally, having in mind that  VSS schemes work with approximate functions, nonmonotone line search seems like a reasonable choice in this setup. 

{\bf{Spectral coefficients.}} Although the considered problem  \eqref{prob1} is not smooth, including some second-order information seems to be beneficial according to the existing results \cite{nasprvi, loreto, kineski}. Moreover, spectral-like methods proved to be efficient in the stochastic framework with increasing accuracy \cite{greta, NKNKJ}. 
We present a framework that allows different spectral coefficients to be combined with subgradient directions. Following \cite{diSerafino}, we test different choices of Barzilai-Borwein (BB) rules in a stochastic environment. 

One of the key points lies in an adaptive sample size strategy. Roughly speaking, the main idea is to balance two types of errors - the one that measures how far is the iterate from the current SAA function's constrained optimum, and the one that estimates the SAA error. More precisely, we present an adaptive strategy that determines when to switch to the next level of accuracy and prove that this pushes the sample size to infinity (or to the full sample size in a finite sum case). In the SPS framework, the convergence result was proved under the assumption of the sample size increase at each iteration, while for AN-SPS the increase is a consequence of the algorithm's construction rather than the assumption. 

We believe that one more important advantage with respect to SPS is a proposed scaling of the subgradient direction. The scaling strategy is not new in general \cite{burke2}, but it is a novelty with respect to the SPS framework. One of the most important consequences of this modification is that the convergence result is proved without boundedness assumptions - we do not impose any assumption of uniformly bounded subgradients, feasible set, nor iterates. Instead, we prove that AN-SPS provides the bounded sequence of iterates under a  mild sample size growth condition. 

The main result - almost sure convergence of the whole sequence of iterates - is proved under rather standard conditions for stochastic analysis.  Moreover, in the finite sum problem case, the convergence is deterministic, and it is proved under a significantly reduced set of assumptions with respect to the general case \eqref{prob1}.  
Furthermore, we proved that the worst-case complexity can achieve the order of $\varepsilon^{-1}$. Although the worst-case complexity result stated in Theorem \ref{teocompl} is comparable to the complexity of standard subgradient methods with a predefined step size sequence and its stochastic variant  (both of order  $\varepsilon^{-2}$, see   \cite{bubi, nemirovsky} for instance), we believe that the advantage of the proposed method lies in its ability to accept larger steps and employ spectral coefficients combined with a nonmonotone line search, which can significantly speed up the method. Furthermore, the proposed method provides a wide framework for improving computational cost complexity since it allows different sampling strategies to be employed. Preliminary numerical tests on Hinge loss problems and common data sets for machine learning show the advantages of the proposed adaptive VSS strategy. We also present the results of a study that investigates how different spectral coefficients combine with different nonmonotone rules. 

{\bf{Contributions.}}  This paper may be seen as a continuation of the work presented in \cite{nasdrugi} and further development of the algorithm LS-SPS (Line Search Spectral Projected Subgradient Method for Nonsmooth Optimization) proposed therein. In this light, the main contributions of this work are the following: 
\begin{itemize}
\item[i)] An adaptive sample size strategy is proposed and we prove  that this strategy pushes the sample size to infinity (or to the maximal sample size in  the finite sum case); 

\item[ii)] We show that the scaling can relax the boundedness assumptions on subgradients, iterates, and feasible set; 

\item[iii)]  For finite sum problems, we provide the worst-case complexity analysis of the proposed method;

\item[iv)] The LS-SPS is generalized in the sense that we allow different nonmonotone line search rules. Although  important for the practical behavior of the algorithm, this change does not affect the convergence analysis and it is investigated mainly through numerical experiments; 

\item[v)] Considering the spectral coefficients, we investigate different strategies for its formulation \cite{diSerafino} in a stochastic framework. 
 Different combinations of spectral coefficients and nonmonotone rules are evaluated within numerical experiments conducted on machine learning Hinge loss problems.
\end{itemize}

{\bf{Paper organization.}} The algorithm is presented in Section \ref{sec2}. Convergence analysis is conducted in Section \ref{sec3}, while preliminary numerical results are reported in Section \ref{sec4}. Section \ref{sec5} is devoted to the conclusions and some proofs are delegated to the Appendix (Section \ref{sec6}). 

{\bf{Notation.}} The notation we use is the following. Vector $x \in \mathbb{R}^n$ is considered as a column vector.  $||\cdot||$ represents the Euclidean norm.  $x_k$ represents an iterate, i.e., an approximation of a solution of problem \eqref{prob1} at iteration $k$. The sample used to approximate the objective function via \eqref{SAA} at iteration $k$ is denoted by  $\mathcal{N}_k$, while   $N_k$ denotes its cardinality. The exact orthogonal projection of $x\in \mathbb{R}^n$ onto $\Omega$ will be denoted as $P_{\Omega}(x)$.  $X^*$ and $f^*$ denote a set of solutions and an optimal value of problem  \eqref{prob1}, respectively. We denote a solution of the problem \eqref{prob1} by $x^*$. Analogously, we denote by  $x_{\cal{N}}^*$, $X_{\cal{N}}^*$ and $f_{\cal{N}}^*$ a solution, set of all solutions and an optimal value of an approximate problem $\min_{x \in \Omega}  f_{\cal{N}}(x)$, respectively. Relevant constants are denoted by capital $C$ (e.g., $C_1$), underlined letter  (e.g., $\underline{\zeta}$) or  overlined letter (e.g., $\bar{c}_1$). We denote by  $\bar{e}_k=|f_{\mathcal{N}_k}(x_k)-f(x_k)|+|f_{\mathcal{N}_k}(x^*)-f(x^*)|$ the relevant SAA errors at iteration $k$.

\section{The Method}\label{sec2}

In this section, we state the proposed  AN-SPS framework algorithm. 
In order to define the rule for updating the sample size  $N_k=|{\cal{N}}_k|$, we introduce the SAA error measure $h(N_k)$, i.e., a proxy for $ |f(x)-f_{\mathcal{N}_k}(x)|$, as follows. 
  In the finite sum case with the  full sample size $N_{max}<\infty$ we define 
  $$h(N_k)=\frac{N_{max}-N_k}{N_{max}},$$
  while in general (unbounded sample size) case we define
  $$h(N_k)=\frac{1}{N_k}.$$ 
  Notice that in both cases we have $h: \mathbb{N} \to [0,1]$  which is monotonically decreasing and strictly positive if the full sample is not attained. Moreover, in the finite sum case we have $h(N_k)=0$ if and only if $N_k=N_{max}$, while in unbounded sample case  we have  $\lim_{N_k \rightarrow\infty}h(N_k)= 0.$
 Other choices are eligible as well, but we keep these ones for simplicity. 

Let us define the upper bound of the predefined interval for the line search by  
$$ \bar{\alpha}_k=\min \{1,C_2/k\},$$ 
where $C_2>0$ can be arbitrarily large.  

\noindent {\bf Algorithm 1: AN-SPS} ({\bf A}daptive Sample Size  {\bf N}onmonotone Line Search  {\bf S}pectral {\bf P}rojected {\bf S}ubgradient Method)
\label{SPGNS}
\begin{itemize}
\item[\textbf{S0}] \textit{Initialization.} Given $N_0, m \in \mathbb{N},$ $x_0\in\Omega,$ $ C_2>0,$  $0 <  \underline{\zeta}\leq  \overline{\zeta}< \infty,$ $\zeta_0 \in \left[\underline{\zeta},\overline{\zeta}\right].$ Set $k = 0$ and $F_0=f_{\mathcal{N}_{0}}(x_0)$.

\item[\textbf{S1}] \textit{Search direction.} Choose  $\bar{g}_{k}\in  \partial f_{\mathcal{N}_{k}}(x_k)$. Set $q_k=\max \{1,\|\bar{g}_{k}\| \}$,  $v_k=\bar{g}_{k}/q_k$ and   $p_{k}=-\zeta_k v_{k}$. 

\item[\textbf{S2}] \textit{Step size.}  

  \textbf{If} $k=0$, set  $\alpha_0=1$. \\
\textbf{Else,} choose $m$ points $\{\tilde{\alpha}_k^1,...,\tilde{\alpha}_k^m\}$ such that 
  $$\frac{1}{k}<\tilde{\alpha}_k^1<\tilde{\alpha}_k^2<\ldots<\tilde{\alpha}_k^m =\bar{\alpha}_k.$$
     \begin{itemize}
         \item[] \textbf{If} the condition 
            \begin{equation}
               f_{\mathcal{N}_{k}}(x_k+\tilde{\alpha}_k^j  p_{k})\leq F_k-\eta \tilde{\alpha}_k^j ||p_{k}||^2
               \label{Armijo}
             \end{equation} 
             is satisfied for some $j \in \{m,m-1,\ldots,1\}$, set $\alpha_k=\tilde{\alpha}_k^{j}$ with the largest possible $j$.\\ 
                  \textbf{Else,} set  $\alpha_k=\frac{1}{k}.$
\end{itemize}
 
\item[\textbf{S3}] \textit{Main update.} Set $z_{k+1}=x_k+\alpha_k  p_{k}$, $x_{k+1} = P_{\Omega}(z_{k+1})$,   $s_k = x_{k+1}-x_k$ and $\theta_k=\|s_k\|.$

\item[\textbf{S4}] \textit{Spectral coefficient update.}  Choose $\zeta_{k+1} \in [\underline{\zeta}, \bar{\zeta}].$ 

\item[\textbf{S5}]  \textit{Sample size update.} If 
$\theta_k < h(N_k)$, choose  $N_{k+1}> N_k$ and a new sample $\mathcal{N}_{k+1}$. Else,   $\mathcal{N}_{k+1}=\mathcal{N}_{k}$.  

\item[\textbf{S6}] \textit{Nonmonotone line search     update.} Determine $F_{k+1}$ such that  $$f_{\mathcal{N}_{k+1}}(x_{k+1}) \leq F_{k+1} < \infty.$$ 
\item[\textbf{S7}]  \textit{Iteration update.} Set $k:=k+1$ and go to \textbf{S1}. 
\end{itemize}

First, notice that the initialization and Step S3 ensure the feasibility of the iterates. In Step S1, we choose an arbitrary subgradient of the current approximation function $f_{\mathcal{N}_{k}}$ at point $x_k$. Further, scaling with $q_k$ implies that   $\|v_k\| \leq 1$. Moreover, the boundedness of the spectral coefficient $\zeta_k$ yields uniformly bounded search directions $p_k$. This is very important from the theoretical point of view since it helps us to overcome the boundedness assumptions mentioned in the Introduction. 

For the step size selection, we practically use a backtracking-type procedure over the predefined interval $(\frac{1}{k},\bar{\alpha}_k]$. Notice that $C_2$ can be arbitrarily large so that in practice $\bar{\alpha}_k$ is equal to 1 in most of the iterations. However, the upper bound $C_2/k$ is needed to ensure theoretical convergence results. The lower bound, $1/k$, is known as a good choice from the theoretical point of view, and often a bad choice in practice. Thus, roughly speaking, the line search checks if larger, but still theoretically sound steps are eligible. Since the Armijo-like condition \eqref{Armijo} is checked in at most $m$ points, the procedure is well defined since if none of these candidate points satisfies condition \eqref{Armijo}, the step size is set to  $1/k$. 
This allows us to use nondescent directions and practically arbitrary nonmonotone (or monotone) rule determined by the choice of $F_k$. For instance, $F_k$ can be set to $f_{{\cal{N}}_{k}}(x_k)+0.5^k$, but various other choices are possible as well. The choice of nonmonotone rule does not affect the theoretical convergence of the algorithm, but it can be very important in practice as we will show in the Numerical results section. Parameter $m$ influences the per-iteration cost of the algorithm since it upper bounds the number of the function $f_{{\cal{N}}_k}$ evaluations within one line search procedure, i.e., within one iteration. Having in mind that the function $f_{{\cal{N}}_k}$ is just an estimate of the objective function in general, we suggest that $m$ should be relatively small in order to avoid an unnecessarily precise line search and high computational costs. On the other hand, having $m$ too small may yield smaller step sizes since $1/k$ is more likely to be accepted in general. Numerical results presented in Section \ref{sec4} are obtained by taking $m=2$ in all conducted experiments. However, tuning this parameter or even making it adaptive may be an interesting topic to investigate.

We will test the performance of some choices for the spectral coefficients, where, from a theoretical point of view, the only requirement is the safeguard stated in Step S4 of the algorithm - $\zeta_k$ must remain within a positive, bounded interval $[\underline{\zeta}, \bar{\zeta}]$.  

Finally, the adaptive sample size strategy is determined within Step S5. The overall step length $\theta_k$  may be considered as a measure of stationarity related to the current objective function approximation $f_{\mathcal{N}_{k}}$. In particular, we will show that, if the sample size is fixed,  $\theta_k$ tends to zero and the sequence of iterates is approaching a minimizer of the current SAA function  (see the proof of Theorem \ref{teoNk} in the sequel). When $\theta_k$ is relatively small (smaller than the measure of SAA error $h(N_k)$), we decide that the two errors are in balance and that we should improve the level of accuracy by enlarging the sample.  Notice that Step S5 allows a completely different sample $\mathcal{N}_{k+1}$ in general with respect to $\mathcal{N}_{k}$ in the case when the sample size is increased. 
However, if the sample size is unchanged, the sample is unchanged, i.e., $\mathcal{N}_{k+1}=\mathcal{N}_{k}$, which allows non-cumulative samples to fit within the proposed framework as well. 

AN-SPS algorithm detects the iteration within which the sample size needs to be increased, but it allows full freedom in the choice of the subsequent sample size as long as it is larger than the current one. After some preliminary tests, we end up with the following  selection: when the sample size is increased, it is done as 
\begin{equation} 
\label{increase} 
N_{k+1}=\lceil \max\{ (1+\theta_k)N_k, r N_k\} \rceil, \end{equation}   
with $r=1.1$. Although some other choices such as direct balancing of $\theta_k$ and $h(N_{k+1})$  seemed more intuitive, they were all outperformed by the choice \eqref{increase}. Disregarding the safeguard part where, in case of $\theta_k=0$, the sample size is increased by 10$\%$, the relation becomes 
$$1+\frac{N_{k+1}-N_k}{N_k}\approx 1+\theta_k.$$
Thus, the relative increase in the sample size is balanced with the stationarity measure. Furthermore, since we know that in these iterations $\theta_k<h(N_k)$, we obtain the relative increase bounded above by  $h(N_k)$. Apparently, this helps the algorithm to overcome the problems caused by the non-beneficiary fast growth of the sample size. 

\section{Convergence analysis}\label{sec3}

This section is devoted to the convergence analysis of the proposed method. One of the main results lies in Theorem \ref{teoNk} where we prove that $h(N_k)$ tends to zero. This means that the sample size tends to infinity in the unbounded sample case,  while in the finite sum case, it means that the full sample is eventually reached.  After that, we show that we can relax the common assumption of uniformly bounded subgradients stated in the convergence analysis in \cite{nasdrugi}. Normalized subgradients have been used in the literature, but they represent a novelty with respect to the SPS framework.   Hence, we need to show that this kind of scaling does not deteriorate the relevant convergence results. 
We state the boundedness of iterates within Proposition \ref{teoBounded}.  Although the convergence result stated in Theorem \ref{teoAS} mainly follows from the analysis of SPS \cite{nasdrugi} (see Theorem 3.1 therein), we provide the proof in the Appendix (Section \ref{sec6}) since it is based on different foundations. Therefore, we show that  AN-SPS retains almost sure convergence under relaxed assumptions with respect to LS-SPS proposed in \cite{nasdrugi}, while, on the other hand,  it brings more freedom to the choice of nonmonotone line search and the spectral coefficient. Finally, we formalize the conditions needed for the convergence in the finite sum case within Theorem \ref{teoASfs} and provide the worst-case complexity analysis.
We start the analysis by stating the conditions on the function under the expectation in problem \eqref{prob1}.

\begin{assumption}
Function  $\tilde{f}(\cdot,\xi)$  is continuous and convex on $\Omega$ for any given $\xi$  and there exists a solution $x_{\cal{N}}^*$ of problem $\min_{x \in \Omega} f_{\cal{N}}(x)$ for any given ${\cal{N}}$. 
\label{pp1}
\end{assumption}
 The previous assumption implies that all the sample functions $f_{\mathcal{N}_{k}}$ are also convex and continuous on $\Omega$. 
  Moreover, notice that the existence of a solution $x_{\cal{N}}^*$ of problem $\min_{x \in \Omega} f_{\cal{N}}(x)$ is guaranteed if the feasible set $\Omega$ is compact or if the objective function is convex and coercive.
We state the first main result below. 

\begin{theorem}
\label{teoNk}
Suppose that Assumption A\ref{pp1} holds and that $\Omega$ is closed and convex. Then the sequence $\{N_k\}_{k \in \mathbb{N}}$ generated by AN-SPS satisfies
\begin{equation} \label{hnk0} \lim_{k \to \infty} h(N_k)=0.\end{equation}
\end{theorem}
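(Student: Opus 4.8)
The plan is to combine a deterministic decay of the overall step $\theta_k$ with the integer, monotone nature of the sample sizes. The key observation is that $\theta_k\to 0$ regardless of how the samples are drawn, simply because the search direction has uniformly bounded norm while the admissible step sizes live in the shrinking interval $(1/k,\bar\alpha_k]$ with $\bar\alpha_k=\min\{1,C_2/k\}\to 0$; the sample size, on the other hand, is nondecreasing and integer-valued, so the test $\theta_k<h(N_k)$ in Step S5 either fires for infinitely many $k$ — which drives $N_k$ to infinity (resp.\ to $N_{max}$) — or stops firing, and the latter is incompatible with $\theta_k\to 0$ unless $h$ has already reached the value $0$.

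First I would prove $\theta_k\to 0$. By Step S1 we have $\|v_k\|=\|\bar g_k\|/\max\{1,\|\bar g_k\|\}\le 1$, hence $\|p_k\|=\zeta_k\|v_k\|\le\bar\zeta$ for every $k$, using $\zeta_k\in[\underline\zeta,\bar\zeta]$. All iterates are feasible (Step S0 and Step S3 give $x_0\in\Omega$ and $x_{k+1}=P_\Omega(z_{k+1})\in\Omega$), so $P_\Omega(x_k)=x_k$, and nonexpansiveness of the projection onto the closed convex set $\Omega$ gives, for every $k$,
\[
\theta_k=\|x_{k+1}-x_k\|=\bigl\|P_\Omega(x_k+\alpha_k p_k)-P_\Omega(x_k)\bigr\|\le\alpha_k\,\|p_k\|\le\bar\zeta\,\alpha_k .
\]
In every branch of Step S2, for $k\ge 1$, the chosen step satisfies $\alpha_k\le\bar\alpha_k$ (the candidates obey $\tilde\alpha_k^j\le\tilde\alpha_k^m=\bar\alpha_k$, and the fallback $1/k$ is also below $\bar\alpha_k$), so $\theta_k\le\bar\zeta\,C_2/k\to 0$.

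Then I would run the dichotomy on Step S5. If the sample size is increased for infinitely many indices $k$, then, since every such update gives $N_{k+1}>N_k$ with $N_k\in\mathbb{N}$ and the sequence is constant in between, $\{N_k\}$ is nondecreasing and unbounded, i.e.\ $N_k\to\infty$; in the general case $h(N_k)=1/N_k\to 0$, and in the finite-sum case $N_k\le N_{max}$ excludes this possibility altogether. If instead the sample size is increased only finitely often, then $N_k=\bar N$ for all $k\ge k_0$, for some $k_0$ and $\bar N$. Were $h(\bar N)>0$, then, because $\theta_k\to 0$, there would be $k_1\ge k_0$ with $\theta_{k_1}<h(\bar N)=h(N_{k_1})$, and Step S5 would force $N_{k_1+1}>N_{k_1}$, contradicting the stabilization; hence $h(\bar N)=0$, so $h(N_k)=0$ for all $k\ge k_0$ (and, in the finite-sum case, $\bar N=N_{max}$). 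In either branch $\lim_{k\to\infty}h(N_k)=0$, which is \eqref{hnk0}. Both cases of $h$ are handled uniformly once one uses only the structural properties noted after its definition: $h$ is monotone, and $h(N)>0$ exactly when the full sample has not been reached, with $h(N)\to 0$ as $N\to\infty$.

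The argument is short, so the main work is bookkeeping rather than a genuine obstacle: one must check that $\alpha_k\le\bar\alpha_k$ in all branches of the line search (which needs the candidate grid $1/k<\tilde\alpha_k^1<\dots<\tilde\alpha_k^m=\bar\alpha_k$ to be nonempty, hence $C_2$ not too small), and combine the two definitions of $h$ in one sweep. I expect the paper actually proves, inside the same proof, the sharper statement that when the sample is frozen the iterates approach a minimizer $x^*_{\bar{\mathcal N}}$ of the corresponding SAA function — this is where Assumption A\ref{pp1} (existence of $x^*_{\mathcal N}$) is used, via a Fej\'er-type subgradient-descent estimate together with $\sum_k\alpha_k=\infty$ and $\sum_k\alpha_k^2<\infty$ — but that refinement is not needed for \eqref{hnk0} itself, which rests only on $\theta_k\to 0$.
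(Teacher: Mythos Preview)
Your argument is correct and considerably more direct than the paper's. You observe that $\theta_k\le\bar\zeta\,\alpha_k\le\bar\zeta\,C_2/k\to 0$ holds \emph{unconditionally}, independent of what the samples do, and then run the obvious dichotomy on Step~S5. The paper instead argues by contradiction that the sample cannot stay fixed: assuming $\mathcal{N}_k\equiv\mathcal{N}$ for all $k\ge\tilde k$, it carries out a full Fej\'er/subgradient analysis with respect to a minimizer $x^*_{\mathcal N}$ of the frozen SAA problem (boundedness of iterates, $\liminf f_{\mathcal N}(x_k)=f^*_{\mathcal N}$ via $\sum\alpha_k=\infty$, extraction of a convergent subsequence, a Cauchy argument) to conclude that $x_k\to x^*_{\mathcal N}$ and hence $\theta_k\to 0$, which then contradicts $\theta_k\ge h(N)>0$. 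This is exactly the ``sharper statement'' you anticipated.

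What each approach buys: your route is shorter and, notably, does not use the existence of SAA minimizers in Assumption~A\ref{pp1} (you only need convexity so that subgradients exist and the algorithm is well defined). Indeed, the very bound $\theta_k\le C_2\bar\zeta/k$ you exploit appears later in the paper's proof of Theorem~\ref{teocompl}, so the authors are aware of it; they simply chose to prove more inside Theorem~\ref{teoNk}. Their longer argument has the side benefit of establishing convergence of the iterates to an SAA minimizer whenever the sample is frozen, a structural fact that motivates Step~S5 (the discussion around $\theta_k$ as a ``measure of stationarity'') and whose ingredients are reused in the later convergence proofs; for the bare conclusion \eqref{hnk0}, however, your argument suffices.
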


\begin{proof}  
First, we show that retaining the same sample pushes $\theta_k$ to zero\footnote{This part of the proof uses the elements of the analysis in \cite{nasdrugi}, but it also brings new steps and thus we provide it in a complete form.}.
Assume that   $N_k=N$ for all $k\geq \tilde{k}$ and  some $N<\infty,\;  \tilde{k} \in \mathbb{N}$.  According to  Step S5 of AN-SPS algorithm,  this means that  $\mathcal{N}_k=\mathcal{N}_{\tilde{k}}=:\mathcal{N}$ for all $k \geq \tilde{k}$.
Let us show that this implies boundedness of  $\{x_k\}_{k\in \mathbb{N}}$. Notice that for all $k$ the step size and the search direction are bounded, more precisely,  $\alpha_k\leq \bar{\alpha}_k \leq 1$ and 
$$\|p_k\|=\|\zeta_k v_{k}\|\leq \bar{\zeta} \|v_k\|\leq \bar{\zeta}.$$
Thus, the $\tilde{k}$  initial iterates must be bounded, i.e., there must exist $C_{\tilde{k}} $ such that $\|x_k\| \leq C_{\tilde{k}}$ for all $k=0,1,...,\tilde{k}.$ Now, let us observe the remaining sequence of iterates, i.e., $\{x_{\tilde{k}+j}\}_{j \in \mathbb{N}}$.
Let 
 $x_{\mathcal{N}}^*$ be an arbitrary solution of the problem $\min_{x \in \Omega} f_{\mathcal{N}}(x)$. Notice  that the convexity of $f_{\mathcal{N}}$ and the fact that  $\bar{g}_{k}\in  \partial f_{\mathcal{N}}(x_k)$ 
 for all $k \geq \tilde{k}$  imply that 
 $$-\overline{g}^T_{k} \left(x_{k}-x\right)\leq f_{\mathcal{N}}(x)-f_{\mathcal{N}}(x_k)$$ 
 for all $k \geq \tilde{k}$ and  all $x\in \mathbb{R}^n$.
Then, by using nonexpansivity of the projection operator and the fact that $x_{\mathcal{N}}^* \in \Omega$, for all $k \geq \tilde{k}$ we obtain
\begin{eqnarray}
\label{e1}
||x_{k+1}-x_{\mathcal{N}}^*||^2 &=& \nonumber ||P_{\Omega}(z_{k+1})-P_{\Omega}(x_{\mathcal{N}}^*)||^2\\ \nonumber
&\leq &\nonumber
||z_{k+1}-x_{\mathcal{N}}^*||^2 = ||x_{k}-\alpha_k \zeta_k v_k-x_{\mathcal{N}}^*||^2 \\ \nonumber
&=& 
||x_{k}-x_{\mathcal{N}}^*||^2-2\alpha_k \zeta_k \frac{1}{q_k}\overline{g}^T_{k} \left(x_{k}-x_{\mathcal{N}}^*\right) + \alpha_k^2 \zeta^2_k || v_{k}||^2 \\ \nonumber
&\leq & 
||x_{k}-x_{\mathcal{N}}^*||^2+2\alpha_k \frac{\zeta_k}{q_k} (f_{\mathcal{N}_{k}}(x_{\mathcal{N}}^*)-f_{\mathcal{N}_{k}}(x_k))+ \alpha_k^2 \bar{\zeta}^2\\ 
&\leq &
||x_{k}-x_{\mathcal{N}}^*||^2 +\alpha_k^2 \bar{\zeta}^2.
\end{eqnarray}
In the last inequality, we use the fact that $\mathcal{N}_k=\mathcal{N}$ for all $k \geq \tilde{k}$.  Thus,  
$$f_{\mathcal{N}_{k}}(x_{\mathcal{N}}^*)-f_{\mathcal{N}_{k}}(x_k)=f_{\mathcal{N}}(x_{\mathcal{N}}^*)-f_{\mathcal{N}}(x_k)\leq 0$$ 
and since 
$\alpha_k \zeta_k / q_k >0$ we obtain the result.  
Furthermore, by using the induction argument, we obtain that for every  $p \in \mathbb{N}$ there holds 
\begin{eqnarray}\nonumber
||x_{\tilde{k}+p}-x_{\mathcal{N}}^*||^2  &\leq&   ||x_{\tilde{k}}-x_{\mathcal{N}}^*||^2+\bar{\zeta}^2 \sum_{j=0}^{p-1} \alpha_{\tilde{k}+j}^2 \leq  ||x_{\tilde{k}}-x_{\mathcal{N}}^*||^2+\bar{\zeta}^2 \sum_{j=0}^{\infty} \alpha_j^2
\\\nonumber
&\leq& ||x_{\tilde{k}}-x_{\mathcal{N}}^*||^2+\bar{\zeta}^2 C_2^2 \sum_{j=0}^{\infty}\frac{1}{k^2}=:\bar{C}_{\tilde{k}}< \infty. 
\end{eqnarray}
Thus, we conclude that the sequence of iterates must be bounded, i.e., there exists a compact set $\bar{\Omega} \subseteq \Omega$ such that $\{x_k\}_{k \in \mathbb{N}} \subseteq \bar{\Omega}$. Since the function $f_{\mathcal{N}}$ is convex due to Assumption A\ref{pp1}, it follows that $f_{\mathcal{N}}$ is locally Lipschitz continuous. Moreover, it is (globally) Lipschitz continuous on the compact set $\bar{\Omega}$. Let us denote the corresponding Lipschitz constant by $L_{\bar{\Omega}}$. Then, we know that $\|g\| \leq L_{\bar{\Omega}}$ holds for any $g \in \partial f_{\mathcal{N}}(x)$ and any $x \in \bar{\Omega}$ (see for example \cite{Pol87} or \cite{Sho85}). Having in mind that $\bar{g}_{k}\in  \partial f_{\mathcal{N}}(x_k)$ for all $k \geq \tilde{k}$, we conclude that  $\|\bar{g}_{k}\| \leq L_{\bar{\Omega}}$ for all $k \geq \tilde{k}$. 

Now, we prove that 
\begin{equation} 
\label{li1}
    \liminf_{k \to \infty} f_{\mathcal{N}}(x_k)=f^*_{\mathcal{N}},
\end{equation}
 where $f^*_{\mathcal{N}}=\min_{x \in \Omega} f_{\mathcal{N}}(x)$. Suppose the contrary, i.e., there exists $\varepsilon_{\mathcal{N}}>0$ such that for all $k \geq \tilde{k}$ there holds $f_{\mathcal{N}}(x_k)-f^*_{\mathcal{N}}\geq 2 \varepsilon_{\mathcal{N}} $. Recall that Assumption A\ref{pp1} implies that $f^*_{\mathcal{N}}$ is finite and that $f_{\mathcal{N}}$ is continuous. Therefore, there exists a sequence $\{y^{\mathcal{N}}_j\}_{j \in \mathbb{N}} \in \Omega $ such that $\lim_{j \to \infty} f_{\mathcal{N}}(y^{\mathcal{N}}_j)=f^*_{\mathcal{N}}$. Moreover, there exists 
a point $\tilde{y}_ {\mathcal{N}} \in \Omega$ such that 
$$f_{\mathcal{N}} (\tilde{y}_ {\mathcal{N}})<f^*_{\mathcal{N}}+\varepsilon_{\mathcal{N}}.$$ Therefore, we conclude that for all $k \geq \tilde{k}$ there holds 
$$f_{\mathcal{N}}(x_k)\geq 
f^*_{\mathcal{N}}+ 2 \varepsilon_{\mathcal{N}}=f^*_{\mathcal{N}}+\varepsilon_{\mathcal{N}} + \varepsilon_{\mathcal{N}}> f_{\mathcal{N}} (\tilde{y}_ {\mathcal{N}})+\varepsilon_{\mathcal{N}},$$
and thus for all $k \geq \tilde{k}$ we have 
$$-\overline{g}^T_{k} \left(x_{k}-\tilde{y}_{\mathcal{N}}\right)\leq f_{\mathcal{N}}(\tilde{y}_{\mathcal{N}})-f_{\mathcal{N}}(x_k)\leq -\varepsilon_{\mathcal{N}}.$$
Following the same steps  as in \eqref{e1} and using the previous inequality, we conclude that 
for all $k \geq \tilde{k}$ there holds 
\begin{eqnarray}
\label{e2}
||x_{k+1}-\tilde{y}_{\mathcal{N}}||^2 &\leq &\nonumber ||z_{k+1}-\tilde{y}_{\mathcal{N}}||^2\\ \nonumber
&\leq &
||x_{k}-\tilde{y}_{\mathcal{N}}||^2-2\alpha_k \zeta_k \frac{1}{q_k}\overline{g}^T_{k} \left(x_{k}-\tilde{y}_{\mathcal{N}}\right) + \alpha_k^2 \zeta^2_k || v_{k}||^2 \\ \nonumber
&\leq & 
||x_{k}-\tilde{y}_{\mathcal{N}}||^2-2\alpha_k \frac{\zeta_k}{q_k} \varepsilon_{\mathcal{N}}+ \alpha_k^2 \bar{\zeta}^2\\ 
&\leq &
||x_{k}-\tilde{y}_{\mathcal{N}}||^2 -2\alpha_k \frac{1}{q_k} \underline{\zeta}  \varepsilon_{\mathcal{N}}+\alpha_k^2 \bar{\zeta}^2.
\end{eqnarray}
Now, using the fact that 
\begin{equation} 
\label{qrec} 
q_k=\max \{1,\|\bar{g}_{k}\| \}\leq \max \{1,L_{\bar{\Omega}} \}:=q,
\end{equation}
we conclude that for all $k \geq \tilde{k}$ there holds 
$$||x_{k+1}-\tilde{y}_{\mathcal{N}}||^2\leq ||x_{k}-\tilde{y}_{\mathcal{N}}||^2 -2\alpha_k \frac{1}{q} \underline{\zeta}  \varepsilon_{\mathcal{N}}+\alpha_k^2 \bar{\zeta}^2
=||x_{k}-\tilde{y}_{\mathcal{N}}||^2 -\alpha_k (\frac{2}{q} \underline{\zeta}  \varepsilon_{\mathcal{N}}-\alpha_k \bar{\zeta}^2).$$
Since $\alpha_k \leq C_2 /k$, there holds $\lim_{k \to \infty} \alpha_k=0$ and thus there must exist $\bar{k}\geq \tilde{k}$ such that $\alpha_k \bar{\zeta}^2 \leq \frac{1}{q} \underline{\zeta}  \varepsilon_{\mathcal{N}}=:\underline{\varepsilon}_{\mathcal{N}}$. Therefore, we have 
$$||x_{k+1}-\tilde{y}_{\mathcal{N}}||^2\leq ||x_{k}-\tilde{y}_{\mathcal{N}}||^2 -\alpha_k \underline{\varepsilon}_{\mathcal{N}}.$$ 
Moreover, for any $p \in \mathbb{N}$ there holds 
$$||x_{\bar{k}+p}-\tilde{y}_{\mathcal{N}}||^2\leq ||x_{\bar{k}}-\tilde{y}_{\mathcal{N}}||^2 - \underline{\varepsilon}_{\mathcal{N}} \sum_{j=0}^{p-1} \alpha_{\bar{k}+j}$$
and letting $p \to \infty$ we obtain the contradiction since $\sum_{k=0}^{\infty} \alpha_k \geq \sum_{k=0}^{\infty} 1/k=\infty. $ 
Thus, we conclude that \eqref{li1} must hold. Therefore there exists  $K_1 \subseteq \mathbb{N}$ such that $$\lim_{k \in K_1} f_{\mathcal{N}}(x_k)=f^*_{\mathcal{N}}$$ and since the sequence of iterates is bounded, there exists $K_2 \subseteq K_1$  and a solution $\tilde{x}_{\mathcal{N}}^*$ of the problem  $\min_{x \in \Omega} f_{\mathcal{N}}(x)$ such that 
\begin{equation}
\lim_{k \in K_2} x_k=\tilde{x}_{\mathcal{N}}^*.
\label{novo314}
\end{equation}
Now, we show that the whole sequence of iterates converges. 
Let 
\begin{equation} 
\label{rec1prvo} 
\lbrace{ x_k \rbrace}_{k\in K_2}:=\lbrace{ x_{k_i} \rbrace}_{i\in\mathbb{N}}.
\end{equation} 
 Following the steps of  \eqref{e1} we obtain that the following holds for any $s\in\mathbb{N}$
$$||x_{k_{i}+s}-\tilde{x}_{\mathcal{N}}^*||^2 
\leq 
||x_{k_{i}}-\tilde{x}_{\mathcal{N}}^*||^2 + \bar{\zeta}^2 \sum_{j=0}^{s-1} \alpha_{k_{i}+j}^2\leq ||x_{k_{i}}-\tilde{x}_{\mathcal{N}}^*||^2 + \bar{\zeta}^2 \sum_{j=k_{i}}^{\infty} \alpha_{j}^2=:a_i.$$
Thus, 
for any  $s, m \in \mathbb{N} $ there holds
$$||x_{k_i+s}-x_{k_i+m}||^2 \leq 2 ||x_{k_i+s}-\tilde{x}_{\mathcal{N}}^*||^2+ 2 ||x_{k_i+m}-\tilde{x}_{\mathcal{N}}^*||^2 \leq 4 a_i.$$
Since  $\sum_{j=k_i}^{\infty}\alpha_j^2$ is a residual of convergent sum and  \eqref{novo314} holds,   we have  $$\lim_{i\to\infty}a_i=0.$$ 
Therefore, for every $\varepsilon > 0$ there exists $ k_i \in \mathbb{N}$ such that for all $t,l \geq k_i $  there holds $||x_{t}-x_{l}|| \leq \varepsilon$, i.e., the sequence $\{x_k\}_{k \in \mathbb{N}}$ is a Cauchy sequence and thus convergent. This, together with \eqref{novo314}, implies  $$\lim_{k\to\infty}x_k=\tilde{x}_{\mathcal{N}}^*,$$ and Step S3 of AN-SPS algorithm implies 
$$\lim_{k \to \infty} \theta_k=\lim_{k \to \infty} \|s_k\|=\lim_{k \to \infty} \|x_{k+1}-x_k\|=0. $$
This completes the first part of the proof, i.e., we have just proved that if the sample is kept fixed, the sequence $\{\theta_k\}_{k \in \mathbb{N}}$ tends to zero. 

Finally, we prove the main result \eqref{hnk0}. 
Assume the contrary. Since the sequence $\{h(N_k)\}_{k\in \mathbb{N}}$ is nonincreasing, this means that we assume the existence of     $\bar{h}>0$  such that $h(N_k)\geq  \bar{h}$ for all $k\in \mathbb{N}$. This further implies that there exists  $N<N_{\infty}$ and $\bar{k} \in \mathbb{N}$ such that $N_k=N$ for all $k \geq \bar{k}$, where $N_{\infty}=\infty$ in unbounded sample case and $N_{\infty}$ coincides with the full sample size in the bounded sample (finite sum) case. Thus, according to the Step S5 of AN-SPS algorithm, there holds that $$\theta_k  \geq h(N_k)=h(N) \geq \bar{h}>0$$ for all $k \geq \bar{k}$, since we would have an increase of the sample size $N$ otherwise. 
On the other hand, we have just proved that if the sample size is fixed, then  
$$\lim_{k \to \infty} \theta_k=0,$$
which is in contradiction with $\theta_k  \geq \bar{h}>0$.  Thus, we conclude that   $$\lim_{k \to \infty} h(N_k)=0,$$ which completes the proof.
\end{proof}

Next, we analyze the conditions that provide a sequence of bounded iterates generated by  AN-SPS algorithm. Let us define the SAA error sequence as follows, \cite{nasdrugi},  
\begin{equation}
\label{ebar}
\bar{e}_k=|f_{\mathcal{N}_k}(x_k)-f(x_k)|+|f_{\mathcal{N}_k}(x^*)-f(x^*)|,
\end{equation} 
where $x^*$ is an arbitrary solution of \eqref{prob1}. 
The proof of the following proposition is similar to the proof of Proposition 3.4 of \cite{nasdrugi}, but the conditions are relaxed since we have $N_k \to \infty$ as a consequence of the Theorem \ref{teoNk}. Moreover,  scaling of the subgradients relaxes the assumption of uniformly bounded $\bar{g}_k$ sequence. Although the modifications are mainly technical, we provide the proof in the Appendix (Section \ref{sec6}) for the sake of completeness.  Condition \eqref{sumek} in the sequel states the sample size growth under which we achieve bounded iterates. For instance, in the cumulative sample case, $N_k=k$ is sufficient to ensure this condition. Although we believe that the condition is not too strong, it is still an assumption and not the consequence of the algorithm, so this issue remains an open question for future work. 

\begin{proposition} \label{teoBounded}
Suppose that $\Omega$ is closed and convex,  Assumption A\ref{pp1} holds and $\lbrace x_k \rbrace_{k \in \mathbb{N}}$ is a sequence generated by Algorithm AN-SPS. Then there exists a compact set $\bar{\Omega}\subseteq\Omega$ such that $\lbrace x_k \rbrace \subseteq \bar{\Omega}$ provided that 
\begin{equation} \label{sumek}
    \sum_{k=0}^{\infty} \bar{e}_k/k \leq C_4 <\infty,
\end{equation}
 where  $C_4$ is a positive constant.
\end{proposition}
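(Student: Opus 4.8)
The plan is to reproduce the one-step estimate \eqref{e1} from the proof of Theorem \ref{teoNk}, but centred at a true solution $x^*\in X^*$ of \eqref{prob1} rather than at a minimizer of a fixed SAA function, so that the SAA error $\bar e_k$ now appears only as a controllable perturbation; the normalization $v_k=\bar g_k/q_k$ is what lets us do this without any uniform bound on the subgradients. Fix an arbitrary $x^*\in X^*$. Since \textbf{S0} and \textbf{S3} keep $x_k\in\Omega$ for every $k$, and $P_\Omega$ is nonexpansive with $P_\Omega(x^*)=x^*$, from $z_{k+1}=x_k-\alpha_k\zeta_k v_k$ I would obtain, exactly as in \eqref{e1},
\[
\|x_{k+1}-x^*\|^2 \le \|x_k-x^*\|^2 - 2\alpha_k\frac{\zeta_k}{q_k}\,\bar g_k^T(x_k-x^*) + \alpha_k^2\zeta_k^2\|v_k\|^2 .
\]

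I would then control the cross term. Convexity of $f_{\mathcal{N}_k}$ and $\bar g_k\in\partial f_{\mathcal{N}_k}(x_k)$ give $-\bar g_k^T(x_k-x^*)\le f_{\mathcal{N}_k}(x^*)-f_{\mathcal{N}_k}(x_k)$, and inserting the true objective,
\[
f_{\mathcal{N}_k}(x^*)-f_{\mathcal{N}_k}(x_k)=\big(f_{\mathcal{N}_k}(x^*)-f(x^*)\big)+\big(f(x^*)-f(x_k)\big)+\big(f(x_k)-f_{\mathcal{N}_k}(x_k)\big)\le \bar e_k ,
\]
since $f(x^*)-f(x_k)\le 0$ (because $x_k\in\Omega$ and $x^*$ minimizes $f$ over $\Omega$) and the remaining two terms are dominated by $\bar e_k$ from \eqref{ebar}. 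Using $q_k\ge1$, $\zeta_k\le\bar\zeta$ and $\|v_k\|\le1$, this produces the recursion
\[
\|x_{k+1}-x^*\|^2\le\|x_k-x^*\|^2+2\bar\zeta\,\alpha_k\bar e_k+\bar\zeta^2\alpha_k^2 .
\]

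The last step is to sum this inequality. For $k\ge1$ we have $\alpha_k\le\bar\alpha_k\le C_2/k$, while the $k=0$ term contributes only the finite constant $2\bar\zeta\bar e_0+\bar\zeta^2$ (with $\bar e_0<\infty$, since $f$ and $f_{\mathcal{N}_0}$ are finite-valued). Telescoping from $0$ to $K-1$ gives, for all $K\in\mathbb{N}$,
\[
\|x_K-x^*\|^2\le\|x_0-x^*\|^2+2\bar\zeta\bar e_0+\bar\zeta^2+2\bar\zeta C_2\sum_{k=1}^{\infty}\frac{\bar e_k}{k}+\bar\zeta^2 C_2^2\sum_{k=1}^{\infty}\frac1{k^2},
\]
and the right-hand side is a finite constant $\bar C$ by \eqref{sumek} and $\sum_{k\ge1}k^{-2}=\pi^2/6$. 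Hence $\{x_k\}_{k\in\mathbb{N}}\subseteq\bar\Omega:=\Omega\cap\{x\in\mathbb{R}^n:\|x-x^*\|\le\sqrt{\bar C}\}$, which is compact because $\Omega$ is closed, and this is exactly the claim.

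I do not anticipate a real obstacle: the computation is routine once the normalization is exploited. The two spots needing attention are the decomposition bounding $f_{\mathcal{N}_k}(x^*)-f_{\mathcal{N}_k}(x_k)$ by $\bar e_k$, which hinges on feasibility of the iterates together with global optimality of $x^*$, and the separate bookkeeping of the $k=0$ term (where $\alpha_0=1$ rather than $C_2/0$). Note that Theorem \ref{teoNk} is not used inside the estimate itself; it enters only indirectly, making \eqref{sumek} a mild assumption rather than a structural one, since $h(N_k)\to0$ forces $\bar e_k\to0$.
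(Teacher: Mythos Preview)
Your proof is correct and follows essentially the same route as the paper: the same one-step recursion centred at $x^*$, the same use of convexity and the decomposition $f_{\mathcal{N}_k}(x^*)-f_{\mathcal{N}_k}(x_k)\le \bar e_k$ via feasibility of $x_k$, and the same telescoping bound using $\alpha_k\le C_2/k$ together with \eqref{sumek}. If anything, you are a bit more careful than the paper in separating out the $k=0$ term (where $\alpha_0=1$) and in explicitly exhibiting $\bar\Omega$ as the intersection of $\Omega$ with a closed ball; the paper's version writes $\sum_{k=0}^\infty 1/k$ and $\sum_{k=0}^\infty 1/k^2$ somewhat loosely. Your closing remark that $h(N_k)\to0$ ``forces $\bar e_k\to0$'' is not quite right without further assumptions (ULLN-type), but this is a side comment and does not affect the proof itself.
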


As it can be seen from the proof,  $\bar{\Omega}$ stated in the previous proposition depends only on $x_0$ and given constants, so it can be (theoretically) determined independently of the sample path.     However, since we consider unbounded samples in general, we need the following assumption. 

\begin{assumption}
For every $x\in \Omega$ there exists a constant $L_x$ such that  $\tilde{f} (x,\xi)$ is locally $L_x$-Lipschitz continuous for any $\xi$. 
\label{pp_Lip}
\end{assumption}
This assumption implies that each SAA function is locally Lipschitz continuous with a constant that depends only on a point $x$ and not on a random vector $\xi$. In a bounded sample case this is obviously satisfied under assumption A\ref{pp1}, while in general, it holds for a certain class of functions - when $\xi$ is separable from $x$ for instance.   
Next, we prove the almost sure convergence of AN-SPS algorithm under the stated assumptions. Notice that \eqref{sumek} does not necessarily imply that $\lim_{k \to \infty } \bar{e}_k =0$. Thus, we add a common assumption in stochastic analysis in order to ensure a.s. convergence of the sequence $\{\bar{e}_k\}_{k \in \mathbb{N}}$. 

\begin{assumption}
The function $\tilde{f}$ is dominated by a P-integrable function on any compact subset of $\mathbb{R}^n$.
\label{pp_dominated}
\end{assumption}

Under the stated assumptions, the  Uniform Law of Large Numbers (ULLN) implies (Theorem 7.48 in \cite{shapiro})
\begin{equation}
 \lim_{N \rightarrow \infty}\sup_{x \in S}|f_{N}(x)-f(x)|=0 \quad \mbox{a.s.}   
 \label{Shapiro-ULLN}
 \end{equation}  for  any compact subset  $S \subseteq \mathbb{R}^n$. This will further imply the a.s. convergence of the sequence $\lbrace \bar{e}_k \rbrace_{k \in \mathbb{N}}$. Notice that $\lim_{k \to \infty } \bar{e}_k =0$ is satisfied in the bounded sample case, as well as \eqref{sumek} since AN-SPS achieves the full sample eventually. In that case, the assumptions A\ref{pp_Lip} and A\ref{pp_dominated} are not needed for the convergence result. 
 
 {\bf{Remark}:} The following theorem states a.s. convergence of the proposed method. Although it follows the same steps, the proof differs from the proof of Theorem 3.1 of \cite{nasdrugi} in several places. Under the stated assumptions, we prove that the sample size tends to infinity and that the iterates remain within a compact set. After that, the proof follows the steps of the proof in \cite{nasdrugi} completely, except for the scaling of the subgradient in Step S1 of AN-SPS algorithm. This alters the inequalities, but the Assumption A\ref{pp_Lip} implies that $q_k$ can be uniformly bounded from above and below, thus the main flow remains the same. We state the proof in the Appendix (Section \ref{sec6}) for completeness.

\begin{theorem}
\label{teoAS}
Suppose that Assumptions A\ref{pp1}-A\ref{pp_dominated} and \eqref{sumek} hold and that $\Omega$ is  closed and convex. Then the sequence $\{x_k\}_{k \in \mathbb{N}}$ generated by AN-SPS converges to a solution of problem \eqref{prob1}  almost surely.  
\end{theorem}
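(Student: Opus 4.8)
The plan is to reduce the claim to the classical Fej\'{e}r-monotonicity analysis of subgradient methods with errors, as in the proof of Theorem~3.1 of \cite{nasdrugi}, once the two features specific to AN-SPS have been neutralised: the adaptive (unbounded) sampling and the scaling $v_k=\bar{g}_{k}/q_k$. First I would collect the preliminaries. By Theorem~\ref{teoNk} we have $h(N_k)\to 0$, so $N_k\to\infty$ in the general case (resp.\ the full sample is reached after finitely many steps in the finite sum case). By Proposition~\ref{teoBounded}, condition \eqref{sumek} forces $\{x_k\}_{k\in\mathbb{N}}$ into a compact set $\bar{\Omega}\subseteq\Omega$. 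On $\bar{\Omega}$, Assumption~A\ref{pp_Lip} together with a finite subcover argument produces a constant $L_{\bar{\Omega}}$ that is a Lipschitz constant of \emph{every} SAA function $f_{\mathcal{N}}$ on $\bar{\Omega}$; hence $\|\bar{g}_{k}\|\le L_{\bar{\Omega}}$ and therefore $1\le q_k\le q:=\max\{1,L_{\bar{\Omega}}\}$ for all $k$. This uniform two-sided bound on $q_k$ is the only place where the scaling requires attention. Finally, Assumptions~A\ref{pp1}, A\ref{pp_dominated} and the ULLN \eqref{Shapiro-ULLN} applied on the compact set $\bar{\Omega}$ (enlarged to contain any solution we refer to) give $\bar{e}_k\to 0$ a.s.

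Next I would derive the basic recursion. Fix any solution $y\in X^*$, which may be taken in $\bar{\Omega}$ by the sample-path independence of $\bar{\Omega}$ noted after Proposition~\ref{teoBounded}. Using $x_{k+1}=P_{\Omega}(x_k-\alpha_k\zeta_k v_k)$, $y=P_{\Omega}(y)$, nonexpansivity of $P_{\Omega}$, $\|v_k\|\le 1$, the subgradient inequality for $\bar{g}_{k}\in\partial f_{\mathcal{N}_k}(x_k)$, and the bounds $\underline{\zeta}\le\zeta_k\le\bar{\zeta}$, $1\le q_k\le q$, $\alpha_k\le C_2/k$, I expect to reach
\begin{equation*}
\|x_{k+1}-y\|^2\le\|x_k-y\|^2-\frac{2\underline{\zeta}}{q}\,\alpha_k\bigl(f(x_k)-f^*\bigr)+t_k^{(y)},\qquad \sum_{k}t_k^{(y)}<\infty\ \text{a.s.},
\end{equation*}
where $t_k^{(y)}=2\bar{\zeta}\alpha_k\bigl(|f_{\mathcal{N}_k}(x_k)-f(x_k)|+|f_{\mathcal{N}_k}(y)-f(y)|\bigr)+\bar{\zeta}^2\alpha_k^2$; the first part is summable by \eqref{sumek} applied with $x^*:=y$, and the second since $\sum_k 1/k^2<\infty$. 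Here one uses $f_{\mathcal{N}_k}(x_k)-f_{\mathcal{N}_k}(y)\ge f(x_k)-f^*-(|f_{\mathcal{N}_k}(x_k)-f(x_k)|+|f_{\mathcal{N}_k}(y)-f(y)|)$ and $f(x_k)\ge f^*$, so no sign considerations on $f_{\mathcal{N}_k}(x_k)-f_{\mathcal{N}_k}(y)$ are needed.

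Then I would extract the consequences in the standard way. Since $f(x_k)-f^*\ge 0$ and $\sum_k t_k^{(y)}<\infty$, the sequence $\{\|x_k-y\|\}_{k\in\mathbb{N}}$ converges and $\sum_k\alpha_k(f(x_k)-f^*)<\infty$; because $\sum_k\alpha_k\ge\sum_k 1/k=\infty$, this yields $\liminf_k f(x_k)=f^*$. Compactness of $\bar{\Omega}$ then gives a subsequence $x_{k_i}\to\bar{x}$ with $f(\bar{x})=f^*$ (continuity of $f$) and $\bar{x}\in\Omega$ ($\Omega$ closed), i.e.\ $\bar{x}\in X^*$. Applying the recursion once more with $y:=\bar{x}$ and dropping the nonpositive term shows $\|x_{k+1}-\bar{x}\|^2\le\|x_k-\bar{x}\|^2+t_k^{(\bar{x})}$ with $\sum_k t_k^{(\bar{x})}<\infty$, so $\|x_k-\bar{x}\|^2$ converges; since $\|x_{k_i}-\bar{x}\|\to 0$ the limit is $0$, whence the whole sequence satisfies $x_k\to\bar{x}\in X^*$ a.s. In the finite sum case the same scheme runs deterministically, and Assumptions~A\ref{pp_Lip}, A\ref{pp_dominated} and the ULLN are not needed: the full sample is reached in finitely many iterations, so $\bar{e}_k=0$ eventually and \eqref{sumek} holds trivially.

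The main obstacle is precisely the coexistence, in this single recursion, of the normalised search direction and the biased SAA errors. Dividing by $q_k$ could in principle vanish (if subgradients were unbounded, which is excluded once $\{x_k\}\subseteq\bar{\Omega}$ via Assumption~A\ref{pp_Lip}) or degenerate below $1$ (excluded by $q_k=\max\{1,\|\bar{g}_{k}\|\}$), and it is exactly these two facts that let the coefficient $\zeta_k/q_k$ be pinched into $[\underline{\zeta}/q,\bar{\zeta}]$, so that the ``descent-like'' term $-\tfrac{2\underline{\zeta}}{q}\alpha_k(f(x_k)-f^*)$ survives while the error term stays summable under \eqref{sumek}. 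Everything downstream — convergence of $\|x_k-y\|$, $\liminf f(x_k)=f^*$, and pinning the whole sequence to a single minimiser — is then the routine (quasi-)Fej\'{e}r argument already used for LS-SPS in \cite{nasdrugi}; notably the nonmonotone line-search value $F_k$ never enters, since the recursion holds for any admissible step $\alpha_k\in[1/k,\bar{\alpha}_k]$.
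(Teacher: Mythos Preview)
Your proposal is correct and shares the overall architecture of the paper's proof (preliminaries via Theorem~\ref{teoNk}, Proposition~\ref{teoBounded}, and the uniform bound $1\le q_k\le q$ from A\ref{pp_Lip}; then $\liminf f(x_k)=f^*$; then a subsequential limit $\bar{x}\in X^*$; then convergence of the whole sequence). The genuine difference is in how $\liminf_k f(x_k)=f^*$ is obtained. The paper argues by contradiction: it introduces a nearly optimal point $\tilde{y}$ and a sup-type error $e_k:=|f_{\mathcal{N}_k}(\tilde{y})-f(\tilde{y})|+\max_{x\in\bar{\Omega}}|f_{\mathcal{N}_k}(x)-f(x)|$, invokes the ULLN \eqref{Shapiro-ULLN} to get $e_k\to 0$ a.s., and then, once $\alpha_k\bar{\zeta}^2+2e_k\bar{\zeta}\le \underline{\zeta}\varepsilon/\bar{q}$, deduces strict decrease of $\|x_k-\tilde{y}\|^2$ contradicting $\sum_k\alpha_k=\infty$. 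You instead write a single quasi-Fej\'er recursion with summable perturbation $t_k^{(y)}$ and read off both the convergence of $\|x_k-y\|$ and $\sum_k\alpha_k(f(x_k)-f^*)<\infty$ directly; this is the textbook route and is shorter, and in fact it never uses A\ref{pp_dominated}/ULLN for the $\liminf$ step, only \eqref{sumek}. Conversely, the paper's contradiction argument uses only $e_k\to 0$ (not summability) for that step, reserving \eqref{sumek} for the final Cauchy-sequence argument. One point worth flagging in both approaches: when you re-apply the recursion with $y:=\bar{x}$, you need the summability $\sum_k\alpha_k\,|f_{\mathcal{N}_k}(\bar{x})-f(\bar{x})|<\infty$ for the \emph{random} accumulation point $\bar{x}$, whereas \eqref{sumek} is stated for a fixed $x^*$; the paper makes the same implicit identification when it writes $\bar{e}_{k_i+j}$ in the Cauchy part with $\tilde{x}$ in place of $x^*$.
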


 Finally, we state the results for the finite sum problem as an important class of \eqref{prob1} 
\begin{equation}
    \label{fsprob1} 
    \min_{x \in \Omega}  \frac{1}{N} \sum_{i=1}^{N} f_i(x).
\end{equation}
As we mentioned before, Assumption A\ref{pp_dominated} is redundant in this case as well as \eqref{sumek} since $\bar{e}_k=0$ for all $k$ large enough. Moreover, Assumption A\ref{pp_Lip} is also satisfied due to the fact that there are only finitely many functions $f_i$. At the end, notice that under Assumption A\ref{pp1}, the full sample is eventually achieved and the proof of Theorem \ref{teoNk} also reveals that the convergence is deterministic. We summarise this in the next theorem. 
\begin{theorem}
\label{teoASfs}
Suppose that Assumption A\ref{pp1} holds and that $\Omega$ is closed and convex. Then the sequence $\{x_k\}_{k \in \mathbb{N}}$ generated by AN-SPS converges to a solution of problem \eqref{fsprob1}.
\end{theorem}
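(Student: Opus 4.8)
The plan is to reduce the statement to the first half of the proof of Theorem~\ref{teoNk} — the part showing that a fixed sample forces the iterates to converge to a minimizer of the corresponding SAA function — after first arguing that in the finite sum setting the sample must stabilize at the full index set after finitely many iterations.

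First I would observe that $\{N_k\}_{k\in\mathbb{N}}$ is a nondecreasing, integer-valued sequence bounded above by $N_{max}$: by Step~S5 of AN-SPS one always has $N_{k+1}>N_k$ or $N_{k+1}=N_k$. Hence $\{N_k\}$ is eventually constant, say $N_k=N_\infty$ for all $k\ge\hat k$. Theorem~\ref{teoNk} gives $h(N_\infty)=\lim_{k\to\infty}h(N_k)=0$, and since in the finite sum case $h(N)=0$ holds exactly when $N=N_{max}$, we obtain $N_\infty=N_{max}$, i.e.\ $N_k=N_{max}$ for all $k\ge\hat k$. Consequently $|\mathcal N_k|=N_{max}=N$, so $\mathcal N_k$ is the full index set $\{1,\dots,N\}$ and $f_{\mathcal N_k}\equiv f$, where $f(x)=\frac1N\sum_{i=1}^N f_i(x)$ is the objective of \eqref{fsprob1}; moreover the condition $\theta_k<h(N_{max})=0$ in Step~S5 can never be met, so $\mathcal N_{k+1}=\mathcal N_k$ for all $k\ge\hat k$. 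Thus from iteration $\hat k$ onward, AN-SPS runs with the fixed sample $\mathcal N:=\{1,\dots,N\}$.

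Next I would invoke the first part of the proof of Theorem~\ref{teoNk} verbatim, taking $\tilde k=\hat k$ and $\mathcal N$ the full sample. That argument uses only Assumption~A\ref{pp1}, closedness and convexity of $\Omega$, the bound $\|p_k\|\le\bar\zeta$, and $\alpha_k\le C_2/k$, and it establishes in turn: boundedness of $\{x_k\}$ in a compact set $\bar\Omega\subseteq\Omega$; global Lipschitz continuity of $f$ on $\bar\Omega$, hence a uniform bound on $q_k$; $\liminf_{k\to\infty}f(x_k)=f^*$ with $f^*=\min_{x\in\Omega}f(x)$; a subsequence of $\{x_k\}$ converging to some $\tilde x^*\in X^*$; and finally, via the Cauchy-sequence argument at the end of that proof, $\lim_{k\to\infty}x_k=\tilde x^*$, a solution of \eqref{fsprob1}. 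Since from iteration $\hat k$ onward the sample is the deterministic full index set, no randomness is involved, so the convergence is deterministic, as asserted. (Alternatively one could note that $\bar e_k=0$ for $k\ge\hat k$, so \eqref{sumek} holds and Assumptions~A\ref{pp_Lip} and~A\ref{pp_dominated} are vacuous here, and then quote Theorem~\ref{teoAS}; but the direct route is shorter and makes the deterministic conclusion immediate.)

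There is essentially no hard step: all the analytic content already lies in Theorem~\ref{teoNk}. The only point requiring care is the passage from $h(N_k)\to0$ to ``$N_k=N_{max}$ for all large $k$'' — this is precisely where finiteness of the sample pool enters, through monotonicity of $\{N_k\}$ together with $h$ being strictly positive on $\{N_0,\dots,N_{max}-1\}$ — and the complementary observation that once the full sample is reached Step~S5 can never change it again, so the ``fixed sample'' regime used in the proof of Theorem~\ref{teoNk} is genuinely in force from $\hat k$ on.
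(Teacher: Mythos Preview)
Your proposal is correct and follows essentially the same approach as the paper: the paper does not give a separate formal proof of this theorem but instead notes, in the paragraph preceding its statement, that in the finite sum case the full sample is eventually reached (by Theorem~\ref{teoNk}), so that $\bar e_k=0$ for all large $k$, Assumptions~A\ref{pp_Lip} and~A\ref{pp_dominated} are redundant, \eqref{sumek} holds trivially, and ``the proof of Theorem~\ref{teoNk} also reveals that the convergence is deterministic.'' Your write-up makes these reductions explicit --- in particular the stabilization argument for $\{N_k\}$ and the observation that Step~S5 never changes the sample once $N_{max}$ is reached --- and correctly identifies both the direct route via the first part of the proof of Theorem~\ref{teoNk} and the alternative via Theorem~\ref{teoAS}.
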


 We also provide the worst-case complexity analysis for the relevant finite sum problem \eqref{fsprob1}. 

\begin{theorem} \label{teocompl}
Suppose that the assumptions of Theorem \ref{teoASfs} hold and that the sample size increases as in  \eqref{increase}. Then, $\varepsilon$-vicinity of an optimal value  $f^*$ of problem \eqref{fsprob1} is reached after at most $$\hat{k}=2 \bar{k}+ \left(\frac{q (\bar{c}_1+||x_{\bar{k}}-x^*||^2)}{\underline{\zeta}}\right)^{\frac{1}{1-\delta}} \varepsilon^{\frac{1}{\delta-1}}$$
iterations, where  $$\bar{k}:=(\lceil C_2 \bar{\zeta} N\rceil+1) \frac{\log(N/N_0)}{\log(r)}, \quad \bar{c}_1:=\sum_{k=0}^{\infty} \frac{C_2^2 \bar{\zeta}^2}{k^2},$$ provided that $\alpha_k \geq k^{-\delta},$ $\delta \in [0,1)$ for all $k \in \{\bar{k}, \bar{k}+1,...,\hat{k}\}$.
\end{theorem}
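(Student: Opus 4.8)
The plan is to split the run of AN-SPS into two phases: a finite ``warm-up'' phase, of length at most $\bar{k}$, during which the full sample $N$ is attained, followed by a purely deterministic descent phase in which a subgradient-type contraction is iterated. Throughout, the key elementary fact is that the overall step is short: by Steps S1--S3 and nonexpansivity of $P_\Omega$,
\[
\theta_k=\|x_{k+1}-x_k\|=\|P_\Omega(z_{k+1})-P_\Omega(x_k)\|\le\alpha_k\|p_k\|\le\bar\alpha_k\bar\zeta\le C_2\bar\zeta/k .
\]

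For Phase~1 I would combine this with the shape of $h$. In the finite sum case $h(N_k)=(N-N_k)/N\ge 1/N$ whenever $N_k<N$, so as soon as $k>C_2\bar\zeta N$ and $N_k<N$ we have $\theta_k<h(N_k)$ and Step S5 \emph{must} raise the sample size; hence each sample-size level persists for at most $\lceil C_2\bar\zeta N\rceil+1$ iterations. On the other hand, by the update rule \eqref{increase} each increase multiplies $N_k$ by at least $r$, so at most $\log(N/N_0)/\log r$ increases occur before $N$ is reached. Multiplying these two counts shows that $\mathcal N_k$ is the full index set — hence $f_{\mathcal N_k}\equiv f$ and $\bar e_k=0$ — for every $k\ge\bar{k}$. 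Moreover the iterates stay in a compact set by the first part of the proof of Theorem~\ref{teoNk}, so \eqref{qrec} supplies a uniform bound $q_k\le q$.

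For Phase~2 I would argue by contradiction, assuming $f(x_k)-f^*\ge\varepsilon$ for every $k\in\{\bar{k},\dots,\hat{k}\}$. Reproducing the computation \eqref{e1} with $\mathcal N_k$ the full sample and $x^*$ an optimal point of \eqref{fsprob1}, and using $\bar g_k^\top(x_k-x^*)\ge f(x_k)-f^*\ge\varepsilon$ (valid since $\bar g_k\in\partial f(x_k)$), $\|v_k\|\le1$, $\underline\zeta\le\zeta_k\le\bar\zeta$ and $q_k\le q$, gives for $k\ge\bar{k}$
\[
\|x_{k+1}-x^*\|^2\le\|x_k-x^*\|^2-\frac{2\underline\zeta\varepsilon}{q}\,\alpha_k+\bar\zeta^2\alpha_k^2 .
\]
Telescoping from $\bar{k}$ to $\hat{k}$, dropping the nonnegative left-hand side and bounding $\bar\zeta^2\sum_{k\ge\bar{k}}\alpha_k^2\le\sum_{k\ge1}C_2^2\bar\zeta^2/k^2=\bar c_1$ yields $\sum_{k=\bar{k}}^{\hat{k}}\alpha_k\le \tfrac{q}{2\underline\zeta\varepsilon}\bigl(\|x_{\bar{k}}-x^*\|^2+\bar c_1\bigr)$. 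For the reverse bound I would invoke the hypothesis $\alpha_k\ge k^{-\delta}$: with $\hat{k}=2\bar{k}+M$ and $M=\bigl(q(\bar c_1+\|x_{\bar{k}}-x^*\|^2)/(\underline\zeta\varepsilon)\bigr)^{1/(1-\delta)}$ every index in the range obeys $k\le\hat{k}\le 2(\hat{k}-\bar{k})$, so $\sum_{k=\bar{k}}^{\hat{k}}\alpha_k\ge(\hat{k}-\bar{k}+1)\hat{k}^{-\delta}>2^{-\delta}(\hat{k}-\bar{k})^{1-\delta}\ge\tfrac12 M^{1-\delta}=\tfrac{q}{2\underline\zeta\varepsilon}\bigl(\bar c_1+\|x_{\bar{k}}-x^*\|^2\bigr)$, which contradicts the upper bound. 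Hence $f(x_k)-f^*<\varepsilon$ for some $k\le\hat{k}$.

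The inequality chain (already done in \eqref{e1}) and the geometric-growth estimate for $N_k$ are routine; the hard part is the bookkeeping that makes the stated constants come out. First, one must justify that the per-level bound $\lceil C_2\bar\zeta N\rceil+1$ and the number of levels $\log(N/N_0)/\log r$ genuinely multiply to $\bar{k}$ even though $\theta_k\le C_2\bar\zeta/k$ depends on the \emph{global} iteration index rather than on a within-level counter. Second, one must arrange the additive term $2\bar{k}$ and the exponent $1/(1-\delta)$ in $\hat{k}$ so that the upper and lower bounds on $\sum_{k=\bar{k}}^{\hat{k}}\alpha_k$ remain incompatible uniformly in $\delta\in[0,1)$ — in particular as $\delta\to1^-$, where $M$ blows up and the argument leans on $\hat{k}\le 2(\hat{k}-\bar{k})$ to retain the factor $2^{-\delta}\ge\tfrac12$.
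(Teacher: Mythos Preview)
Your proposal is correct and follows essentially the same two–phase decomposition as the paper: bound the number of iterations until the full sample is reached via the step bound $\theta_k\le C_2\bar\zeta/k$, the lower bound $h(N_k)\ge 1/N$, and the geometric growth in \eqref{increase}; then telescope the subgradient inequality from \eqref{e1} over the full-sample phase. The only tactical differences are that in Phase~2 the paper directly bounds the first index $\hat{j}$ at which $\varepsilon$-optimality occurs (replacing each $\alpha_{\bar k+j}$ by the common lower bound $(\bar k+\hat j)^{-\delta}$ and then splitting on $\hat j\lessgtr\bar k$), whereas you phrase it as a contradiction and compare upper and lower bounds on $\sum_{k=\bar k}^{\hat k}\alpha_k$; your inequality $\hat k\le 2(\hat k-\bar k)$ plays exactly the role of the paper's case $\hat j>\bar k$, and both routes produce the same constants.
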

\begin{proof}
Let us denote by $N^1<N^2<...<N^d$ all the sample sizes that are used during the optimization process. Then, we have that $N^1=N_0$, where $N_0$ is the initial sample size, and $N^d=N$ since we have proved that the full sample is reached eventually. Furthermore, according to \eqref{increase}, we know that $N^d \geq r^{d-1} N_0$ and thus we conclude that 
\begin{equation*} 
\label{spoljasnje} 
d-1\leq \frac{\log(N/N_0)}{\log(r)}.
\end{equation*}
Furthermore, notice  that for any $k \in \mathbb{N}$ there holds 
$$\theta_k = \|x_{k+1}-x_k\|=\|P_{\Omega}(z_{k+1})-P_{\Omega}(x_k)\|\leq \|z_{k+1}-x_k\|=\|\alpha_k p_k\|\leq \frac{C_2}{k} \bar{\zeta}.$$
Suppose that we are at iteration $k$ with a sample size $N_k=N^j$, with $j<d$. Then, according to Step S5 of Algorithm 1, the sample size $N^j$ is changed after at most 
\begin{equation*}
\label{unutrasnje} 
    \lceil \frac{C_2 \bar{\zeta}}{h(N^j)}\rceil +1
\end{equation*} 
iterations.  Moreover, since $N^j\leq N-1$ for all $j=1,...,d-1$, there must hold that $$h(N^j) \geq h(N-1)=\frac{N-(N-1)}{N}=\frac{1}{N}$$ for all $j=1,...,d-1$ and thus the number of iterations with the same sample size smaller than $N$ is uniformly bounded by $\lceil C_2 \bar{\zeta} N\rceil+1$. Thus, we conclude that after at most 
\begin{equation*}
\label{kbar} 
   \bar{k}:=(\lceil C_2 \bar{\zeta} N\rceil+1) \frac{\log(N/N_0)}{\log(r)}
\end{equation*} 
iterations the full sample size is reached.

Now, let us observe the iterations $k \geq \bar{k}$ and denote the objective function of problem \eqref{fsprob1} by  $f$. Theorem \ref{teoASfs} implies that $\lim_{k \to \infty} f(x_k)=f^*$ and thus there exists a finite iteration $k$ such that $f(x_k)< f^*+\varepsilon.$ Let us denote by $\hat{j}$ the smallest  $j \in \mathbb{N}_{0}$ such that $f(x_{\bar{k}+\hat{j}})< f(x^*)+\varepsilon$, where   $x^*$ is a solution of problem \eqref{fsprob1}. Using the same arguments as in  \eqref{e1}, we obtain 
\begin{equation}
    \label{rec1} 
    ||x_{\bar{k}+\hat{j}}-x^*||^2 \leq ||x_{\bar{k}}-x^*||^2 -\sum_{j=0}^{\hat{j}-1} 2 \alpha_{\bar{k}+j} \zeta_{\bar{k}+j}\frac{1}{q_{\bar{k}+j}} (f(x_{\bar{k}+j})-f(x^*))+
    \sum_{j=0}^{\hat{j}-1} \alpha^2_{\bar{k}+j} \zeta^2_{\bar{k}+j}.
\end{equation}
Notice that 
\begin{equation} \label{rec2a}
    \sum_{j=0}^{\hat{j}-1} \alpha^2_{\bar{k}+j} \zeta^2_{\bar{k}+j}\leq \sum_{k=0}^{\infty} \frac{C_2^2 \bar{\zeta}^2}{k^2}:=\bar{c}_1 <\infty.
\end{equation}
Moreover, using \eqref{qrec}, \eqref{rec2a}, $\zeta_k \geq \underline{\zeta}$ for all $k$,  and $$\alpha_{\bar{k}+j}\geq \frac{1}{(\bar{k}+j)^{\delta}}\geq \frac{1}{(\bar{k}+\hat{j})^{\delta}},  
 \quad f(x_{\bar{k}+j})-f(x^*)\geq \varepsilon, \quad j=0,...,\hat{j}-1,$$
 from \eqref{rec1} we obtain 
 \begin{equation*}
     \label{rec3}
     0\leq  ||x_{\bar{k}}-x^*||^2 -   \frac{2 \hat{j} \underline{\zeta} \varepsilon }{q (\bar{k}+\hat{j})^{\delta}}  +
    \bar{c}_1.
 \end{equation*}
 Finally, let us observe two cases: 1) $\hat{j} \leq \bar{k}$, and 2) $\hat{j} > \bar{k}$. In the first case, the upper bound on $\hat{j}$ is obvious. In the second case, we have 
 \begin{equation*}
     \label{rec4}
     0\leq  ||x_{\bar{k}}-x^*||^2 -   \frac{2 \hat{j} \underline{\zeta} \varepsilon }{q \hat{j}^{\delta} 2^{\delta}}  +
    \bar{c}_1\leq  ||x_{\bar{k}}-x^*||^2 -   \frac{ \hat{j}^{1-\delta} \underline{\zeta} \varepsilon }{q }  +
    \bar{c}_1,
 \end{equation*}
 and thus 
 $$\hat{j} \leq \left(\frac{q (\bar{c}_1+||x_{\bar{k}}-x^*||^2)}{\underline{\zeta}}\right)^{\frac{1}{1-\delta}} \varepsilon^{\frac{1}{\delta-1}}=:\bar{c}_2.$$
 Combining both cases we conclude that $$\hat{j} \leq \max \{\bar{c}_2,\bar{k}\}\leq \bar{c}_2+\bar{k} $$
 and thus 
 $\hat{k} \leq \bar{k}+\bar{c}_2+\bar{k}=2 \bar{k}+\bar{c}_2,$ which completes the proof. 
\end{proof}
A few words are due to this result. The number of iterations $\hat{k} $ to reach the $\varepsilon$-vicinity of the optimal value represents the worst-case complexity and it is obtained by using very conservative bounds. In this setup, the parameter $r$, which controls the increase of the sample size, influences the number of iterations to reach the full sample size through $\log(r)$. Higher $r$ yields smaller $\hat{k}$, but it also brings potentially higher computational costs as larger samples are needed to compute the approximate functions and the corresponding subgradients. Notice that the proposed algorithm requires only one subgradient per iteration, while the costs of evaluating the approximate objective function depend on the line search. However, the per-iteration costs can be controlled by the parameter $m$  which represents the maximal number of trial points at which the approximate function is evaluated during the line search. In our experiments, we set $m=2$, and we believe that this number should be modest to avoid unnecessarily detailed line search. However, choosing an optimal value for $m$, or even adaptive $m_k$, could be an interesting topic for some future research since it influences the computational cost complexity. 

The assumption $\alpha_k \geq k^{-\delta}, \delta \in [0,1)$ for all $k \in \{\bar{k}, \bar{k}+1,...,\hat{k}\}$ actually indicates that the line search condition \eqref{Armijo} is satisfied in a finite number of iterations $k \in \{\bar{k}, \bar{k}+1,...,\hat{k}\}$. Since the step sizes are upper bounded by $C_2/k$, this is possible only if we assume that $C_2$ is large enough. Notice that the acceptance of a trial point can be controlled by $F_k$. For instance, if $F_k$ is set to $f_{{\cal{N}}_k}(x_k)+C/2^k$, choosing large $C$ increases the chances of successful line search and even of accepting the full step in finitely many iterations. If this is the case, more precisely, if $\alpha_k \geq k^{-\delta}$ with $\delta=0$ for all $k \in \{\bar{k}, \bar{k}+1,...,\hat{k}\}$, we achieve the complexity of order $\varepsilon^{-1}$. 

We end this section by noticing that the complexity result with respect to the expected objective function's value, as the one in Theorem 3.4 of \cite{nasprvi}, can be achieved, but under additional sampling assumptions. Although this type of result can be helpful, we believe that the advantage of the proposed AN-SPS method lies in its ability to embed various sampling and nonmonotone line search strategies, allowing the method to adapt to the problem at hand and produce good practical behavior.  

\section{Numerical results}\label{sec4}

Within this section, we test the performance of AN-SPS algorithm on well-known binary classification data sets listed in Table \ref{tabela_dataset}. 

The problem that we consider is a constrained finite sum problem with $L_2$-regularized hinge loss local cost  functions, i.e., 
\begin{align*}
\min_{x \in \Omega} f_N (x) &: = \delta ||x||^2 + \frac{1}{N}\sum_{i=1}^{N} \max\lbrace{0, 1 - z_i x^T w_i\rbrace},\\
\Omega& : = \lbrace{x \in \mathbb{R}^n \;: ||x||^2 \leq \frac{1}{\delta} \rbrace},
    \end{align*}
where $\delta=10$ is the regularization parameter, $w_i \in \mathbb{R}^n $ are the attributes and $z_i \in \lbrace{1,-1\rbrace}  $ are the corresponding labels. 
\begin{table}[!ht]
\begin{center}
 \begin{tabular}{||c l c c  ||} 
 \hline
  & Data set & $N$ & $n$  \\ [0.5ex] 
 \hline\hline
 1 & SPLICE \cite{SPLiADL}& 3175& 60  \\ 
 \hline
 2 & MUSHROOMS \cite{MUSH}& 8124& 112  \\ 
 \hline
 3 & ADULT9 \cite{SPLiADL}& 32561& 123  \\
 \hline
 4 & MNIST  \cite{MNIST} &70000 & 784  \\ [0.5ex] 
 \hline
\end{tabular}
\caption{Properties of the data sets used in the experiments.}\label{tabela_dataset}
\end{center}
\end{table}

AN-SPS algorithm is implemented with the following parameters: $C_2=100, \eta =10^{-4}, m=2, N_0=\lceil{0.1 N \rceil}.$ The initial point $x_0$ is chosen randomly from $\Omega$. We use the method proposed in \cite[Algorithm 2, p.~1155]{kineski} with $B_k=I$ to find a descent direction  $-\overline{g}_k$ which is further scaled as in Step S1 of AN-SPS algorithm, i.e., $p_k=-\zeta_k \overline{g}_k/q_k$. 
The sample size is updated according to  Step S5 of AN-SPS and  \eqref{increase}. Recall that the sample size is increased only if $\theta_k < h(N_k)$.

We use cumulative samples, i.e., ${\cal{N}}_k \subseteq {\cal{N}}_{k+1}$ and thus, following the conclusions in \cite{greta}, we calculate the spectral coefficients based on $s_k=x_{k+1}-x_k$ and the subgradient difference $y_k=\tilde{g}_k-\bar{g_k}$, where $\tilde{g}_k \in \partial f_{{\cal{N}}_k}(x_{k+1})$. This choice requires additional costs with respect to the choice of $\tilde{g}_k=\bar{g}_{k+1}$, but it diminishes the influence of the noise since the difference is calculated on the same approximate function.  Furthermore, we test four different choices for the spectral coefficient (see   \cite{diSerafino} and the references therein for more details): 
\begin{itemize}
   \item Barzilai-Borwein 1 (BB1) \cite{BB}: 
   $$
     \lambda_k^{BB1}=\frac{s_k^T s_k}{s_k^T y_k};$$ 
    \item  Barzilai-Borwein 2 (BB2) \cite{BB}: 
    $$  \lambda_k^{BB2}=\frac{y_k^T s_k}{y_k^T y_k};$$ 
    \item Adaptive  Barzilai-Borwein (ABB) \cite{abb}:
    $$ \lambda_k :=\left\{ \begin{array}{ll}
\lambda_k^{BB2},& \frac{\lambda_k^{BB2}}{\lambda_k^{BB1}}<0.8,\\
 \lambda_k^{BB1},& \text{otherwise};
 \end{array} \right.
     $$
    \item Adaptive  Barzilai-Borwein  - minimum (ABBmin) \cite{abbmin}:
     $$\lambda_k :=\left\{ \begin{array}{ll}
 \min\lbrace{\lambda_j^{BB2}:j=\max\lbrace{1,k-m_a\rbrace},...,k\rbrace},& \frac{\lambda_k^{BB2}}{\lambda_k^{BB1}}<0.8,\\
 \lambda_k^{BB1},& \text{otherwise},
 \end{array} \right.$$
  where $m_a$ is a nonnegative integer set to 5 in our experiments.
\end{itemize}
For all the considered choices we take the following safeguard 
$$\zeta_k=\min \lbrace\overline{\zeta},\max\lbrace\underline{\zeta},\lambda_k\}\}, \quad  \underline{\zeta}=10^{-4}, \quad \overline{\zeta}=10^4.$$ 
Since the fixed step size such as $\alpha_k=1/k$ was already addressed in \cite{nasdrugi} where the results show that it was clearly outperformed by the line search LS-SPS method, we focus our attention on adaptive step size rules.  The value of $\tilde{\alpha}^1_k$ is chosen to be  $\tilde{\alpha}^1_k=\frac{1/k+\bar{\alpha}_k}{2}$, i.e., it is the middle point of the interval $\left[\frac{1}{k},\bar{\alpha}_k\right]$. Regarding the nonmonotone rule, we also test four choices  (see   \cite{NKNKJ2} and the references therein for more details): 
\begin{itemize}
    \item Maximum (MAX) \cite{grippo}: 
    $$F_k= \max_{i \in [\max \{1,k-5\}, k]}{f_{\mathcal{N}_{i}}(x_i)};  $$ 
    \item Convex combination  (CCA) \cite{zhang}: 
    $$ F_k= \max \{f_{\mathcal{N}_{k}}(x_k), D_k\},\; D_{k+1}=\frac{\eta_k q_k}{q_{k+1}}D_k+\frac{1}{q_{k+1}} f_{\mathcal{N}_{k+1}}(x_{k+1})$$ $$D_0=f_{\mathcal{N}_{0}}(x_0), \;
    q_{k+1}=\eta_k q_k+1,\;  q_0=1, \; 
   \eta_k=0.85;$$
   
    \item   Monotone rule (MON):  $$F_k=f_{\mathcal{N}_{k}}(x_k);$$
   
      \item Additional term (ADA) \cite{li}:  $$F_k= f_{\mathcal{N}_{k}}(x_k)+ \frac{1}{2^k}.$$
\end{itemize}

In order to find the best combination of the strategies proposed above, we track the objective function value and plot it against the FEV - the number of scalar products, which serves as a measure of computational cost. All the plots are in the log scale. 
In the first phase of the experiments, we test AN-SPS with different combinations of spectral coefficients and nonmonotone rules, on four different data sets.  The results reveal the benefits of the ADA rule in almost all cases, as it can be seen on representative graphs on MNIST data set (Figure \ref{mnist1v}).  In particular, as expected, more "nonmonotonicity" usually yielded better results when combined with the spectral directions.

\begin{figure}[htbp]
    \includegraphics[width=0.4\textwidth,angle = 270]{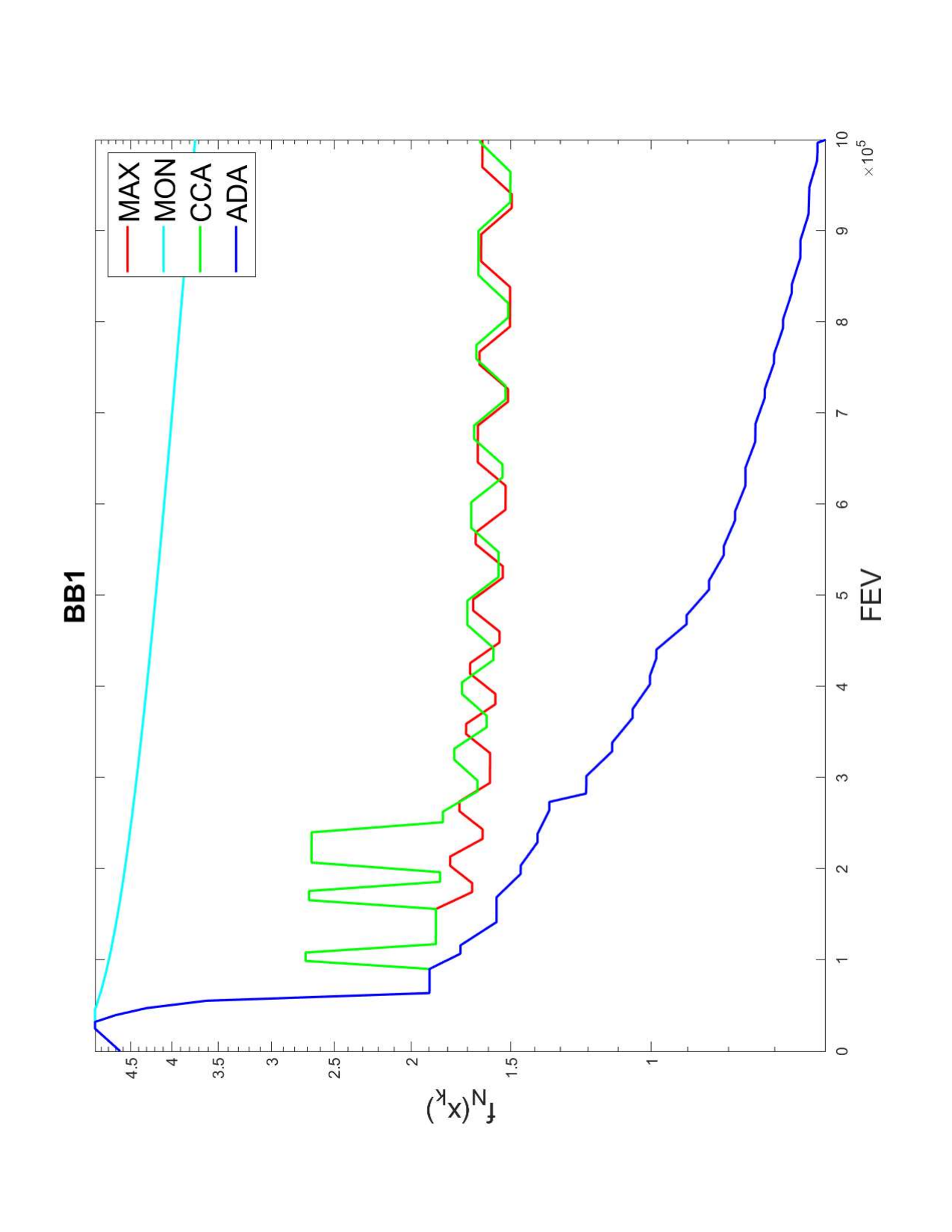}
   \includegraphics[width=0.4\textwidth,angle = 270]{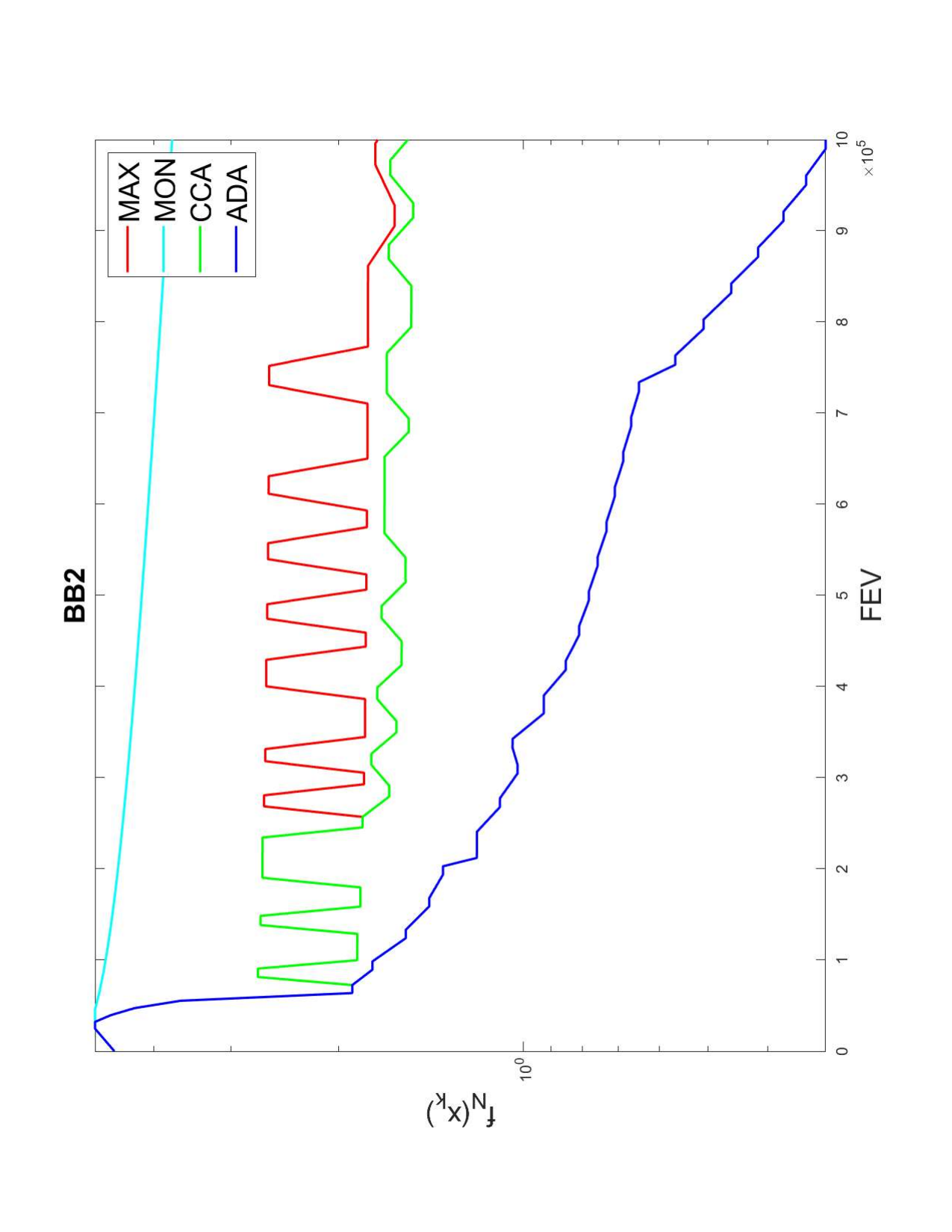}\\
       \includegraphics[width=0.4\textwidth,angle = 270]{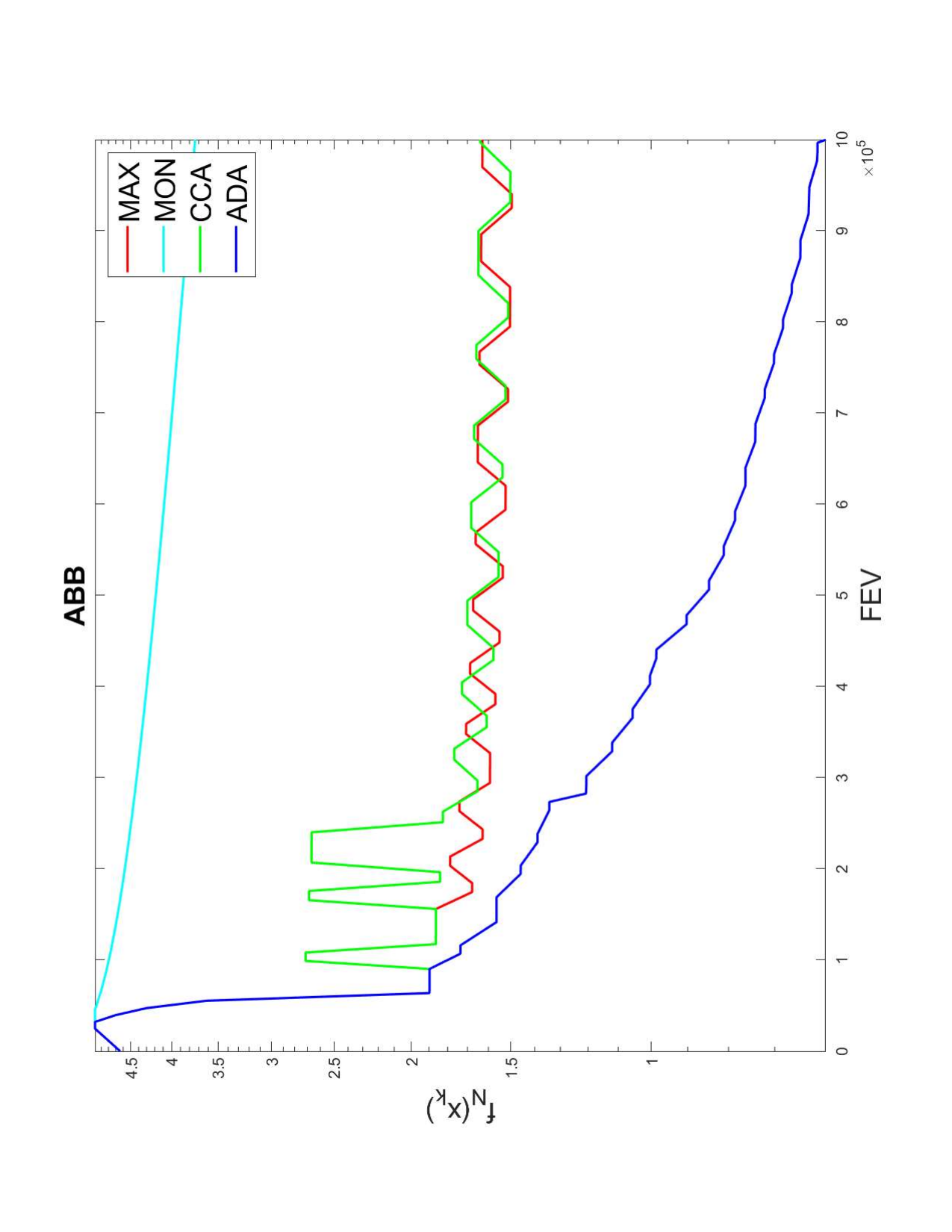}
   \includegraphics[width=0.4\textwidth,angle = 270]{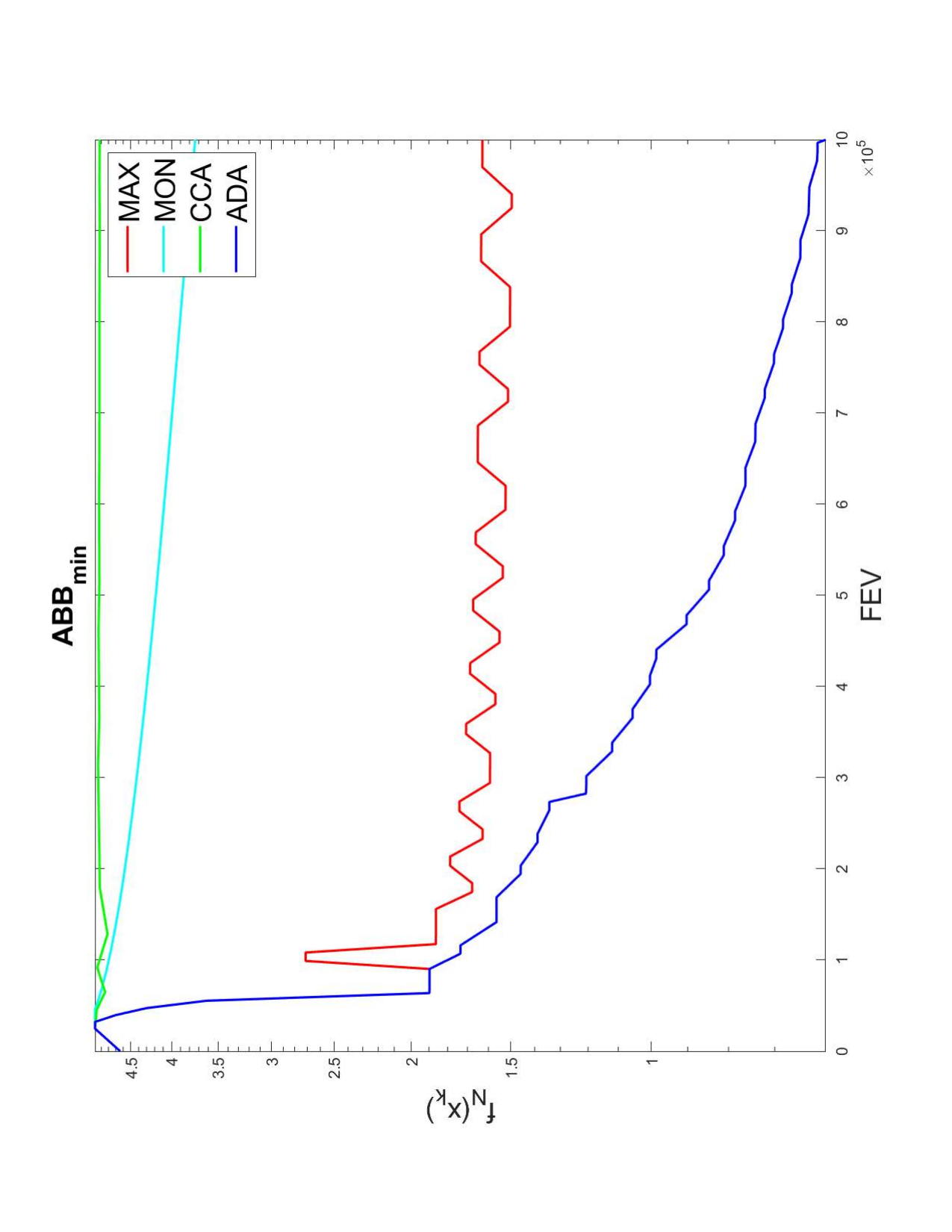}\\
    \caption{ {\footnotesize{AN-SPS algorithm with different nonmonotone rules and spectral coefficients.
    Objective function value against the computational cost (FEV). MNIST data set.}} }\label{mnist1v}
\end{figure}

Furthermore, in order to see the benefits of the adaptive sample size strategy, we compare AN-SPS with: 
\begin{itemize}
\item[1)] heuristic (HEUR) where the sample size is increased at each iteration by $N_{k+1} = \lceil\min \lbrace 1.1N_k, N\rbrace \rceil$; 
\item[2)] fixed sample  strategy (FULL) where $N_k=N$ at each iteration. 
\end{itemize}
We do the same tests for the HEUR and FULL to find the best-performing combinations of BB and line search rules. Finally, we compare the best-performing algorithms of each sample size strategy. The results for all the considered data sets are presented in Figure \ref{finale} and they show clear advantages of the adaptive sample size strategy in terms of computational costs.

\begin{figure}[htbp]
    \includegraphics[width=0.4\textwidth,angle = 270]{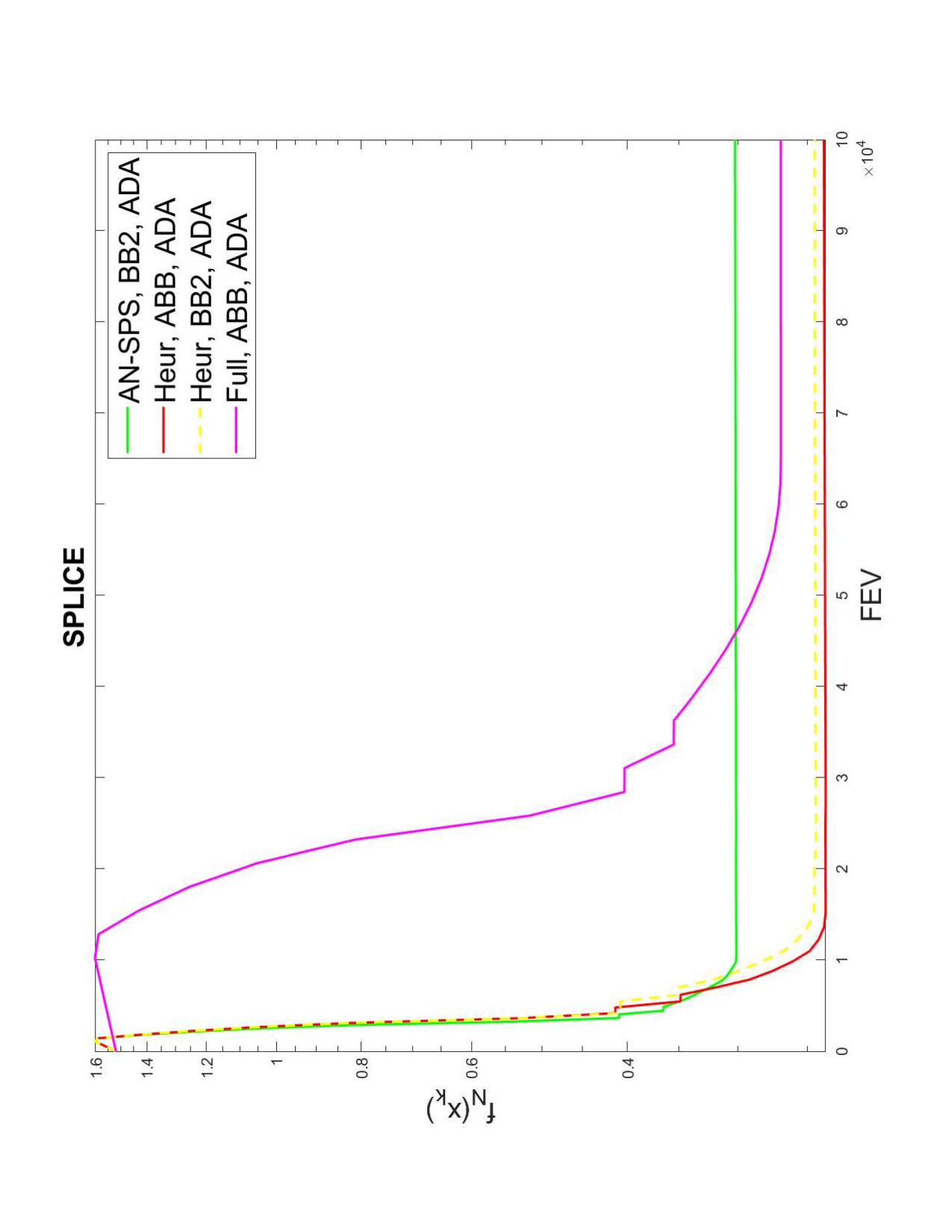}
   \includegraphics[width=0.4\textwidth,angle = 270]{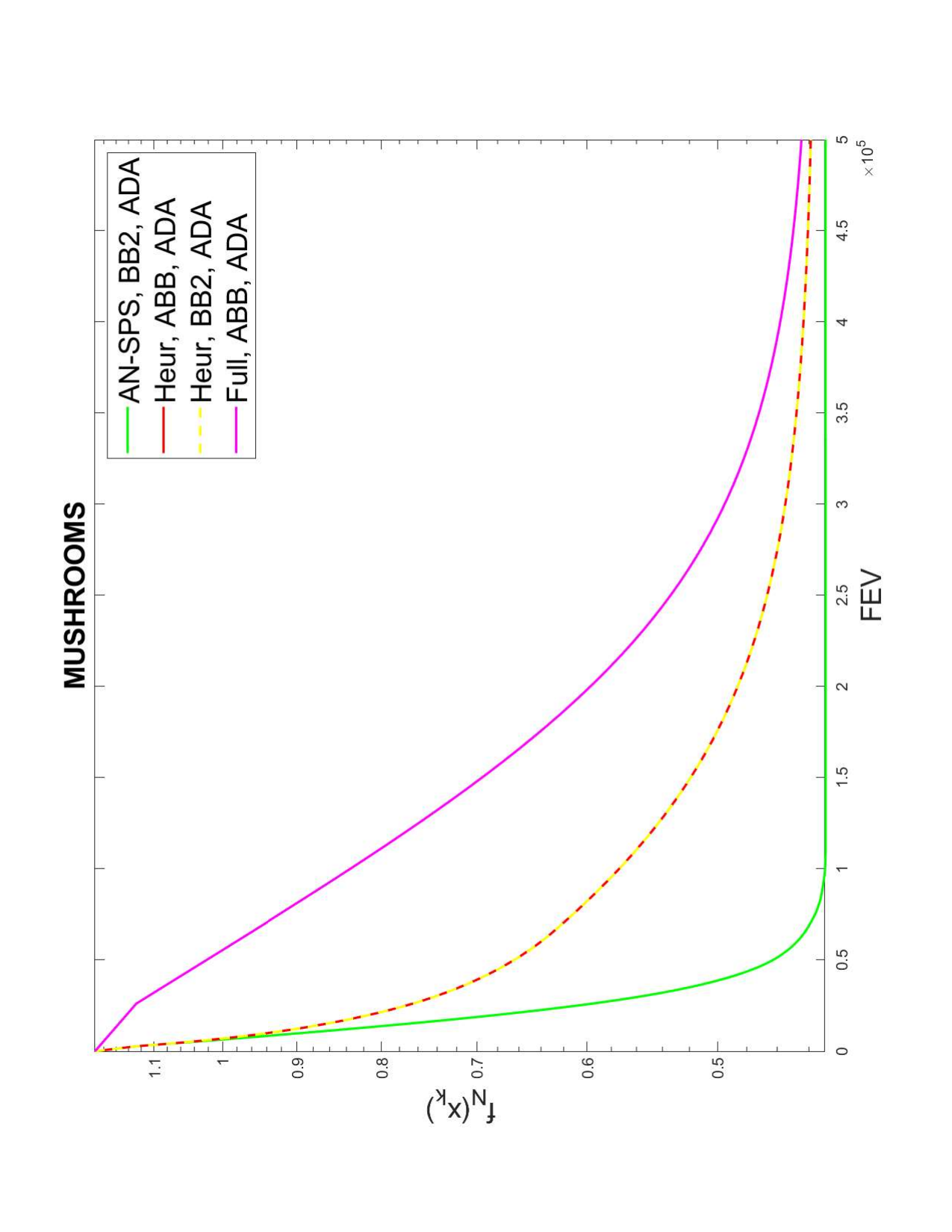}\\
       \includegraphics[width=0.4\textwidth,angle = 270]{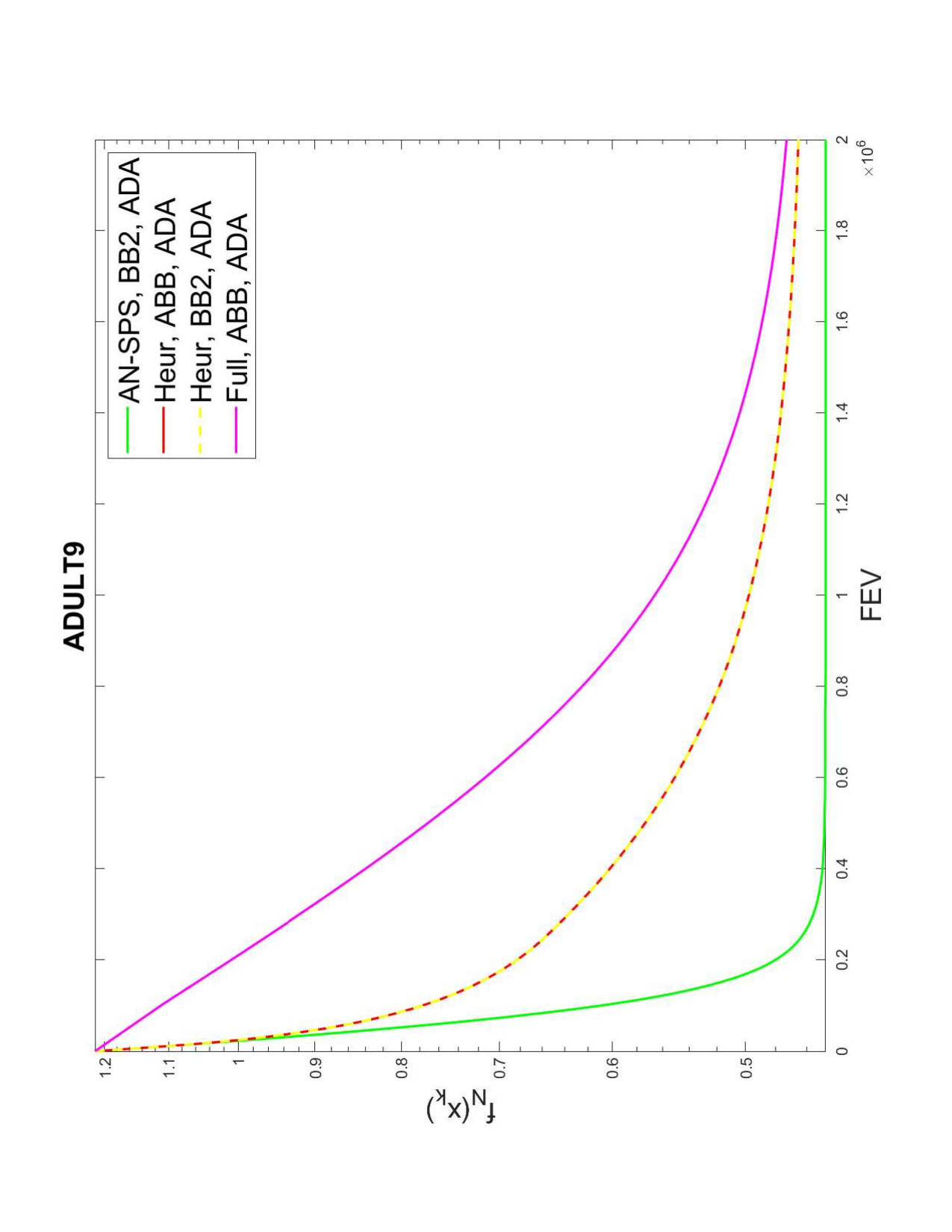}
   \includegraphics[width=0.4\textwidth,angle = 270]{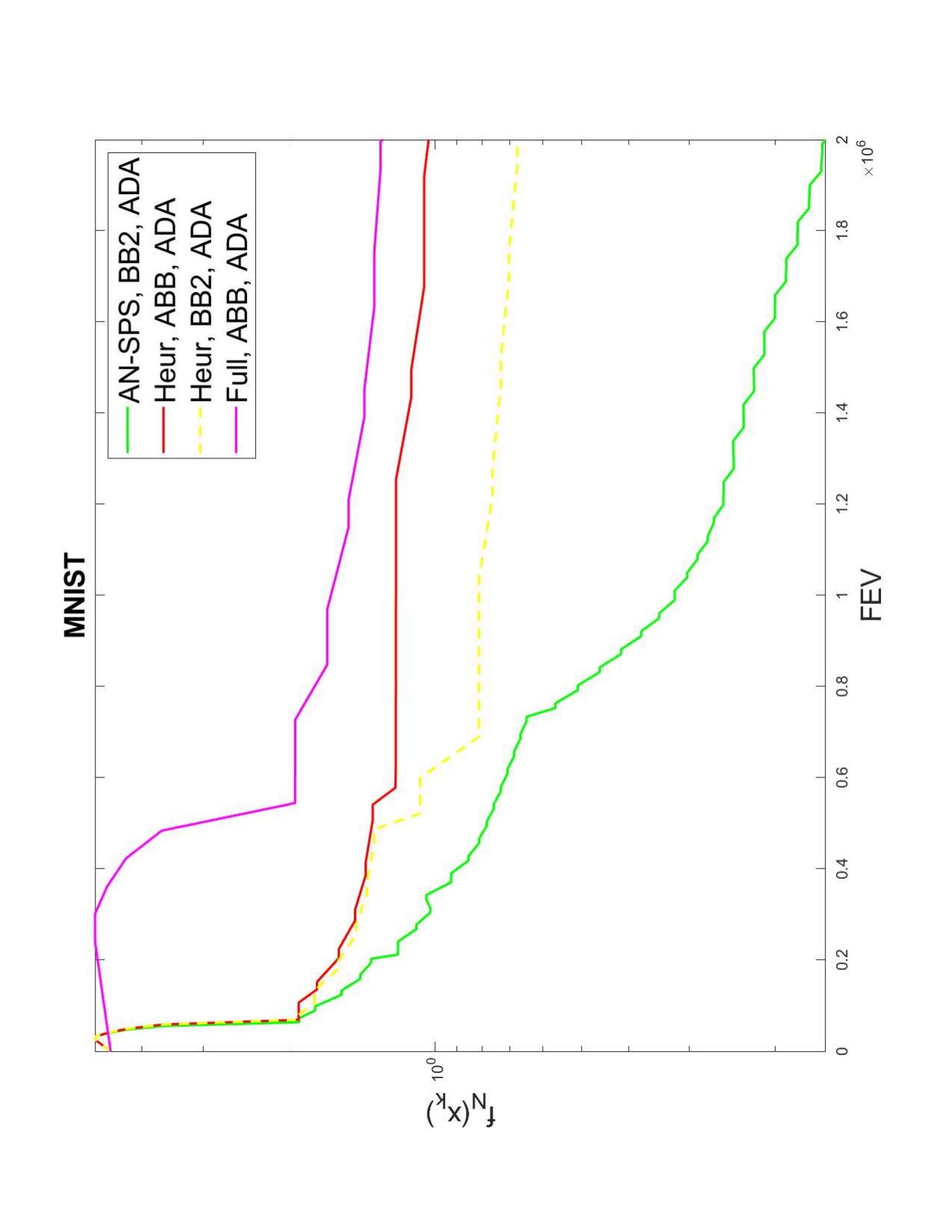}\\
    \caption{{\footnotesize{Comparison of the best-performing combinations of spectral coefficients and nonmonotone rules of AN-SPS, HEUR, and FULL sample size strategies.
        }} }\label{finale}
\end{figure}

We also provide some results on problems that are convex, but not necessarily strongly convex. In particular, we consider the same loss function, but without the $L_2$-regularization part, i.e., the following problem 
\begin{align*}
\min_{x \in \Omega} f_N (x) &: = \frac{1}{N}\sum_{i=1}^{N} \max\lbrace{0, 1 - z_i x^T w_i\rbrace}, \; 
\Omega : = \lbrace{x \in \mathbb{R}^n \;: ||x||^2 \leq 0.1 \rbrace}.
\end{align*}
Instead of going through the phase of finding the best combination for each method, we use the best-performing combinations obtained from testing the strongly convex case. The results for convex case are presented in Figure \ref{faza3_dodatno} and they show that the proposed adaptive schemes are competitive even if the regularization part is dropped.
\begin{figure}[htbp]
    \includegraphics[width=0.4\textwidth,angle = 270]{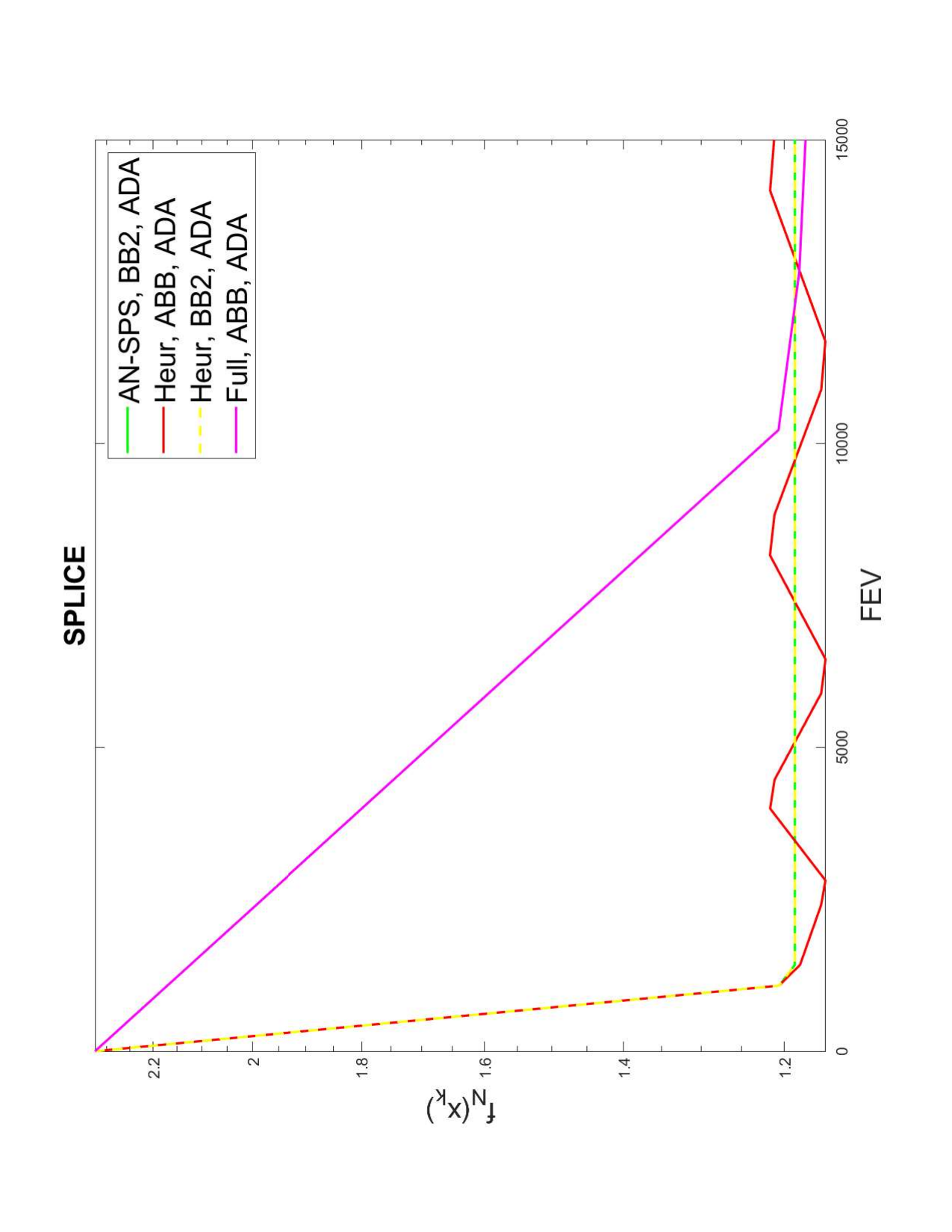}
   \includegraphics[width=0.4\textwidth,angle = 270]{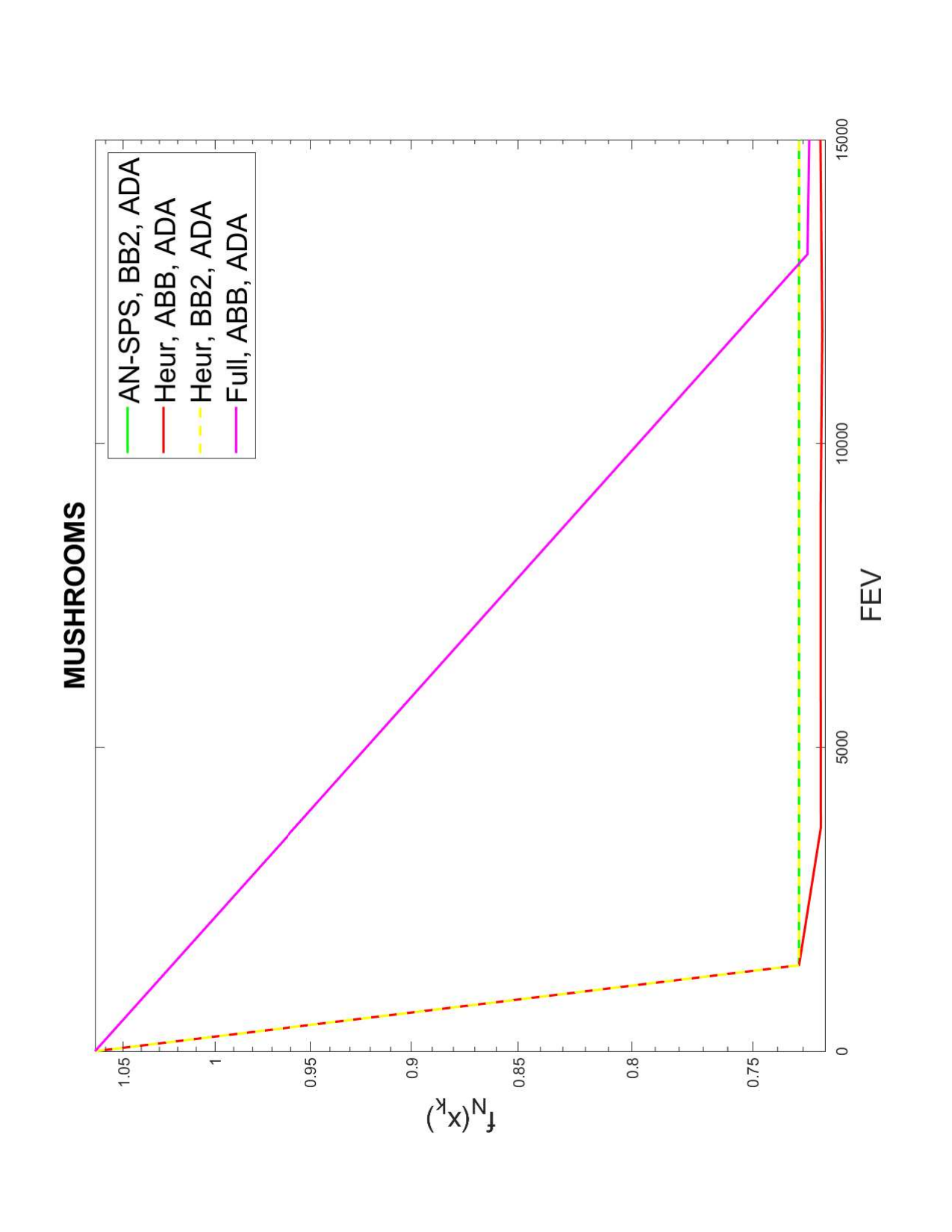}\\
       \includegraphics[width=0.4\textwidth,angle = 270]{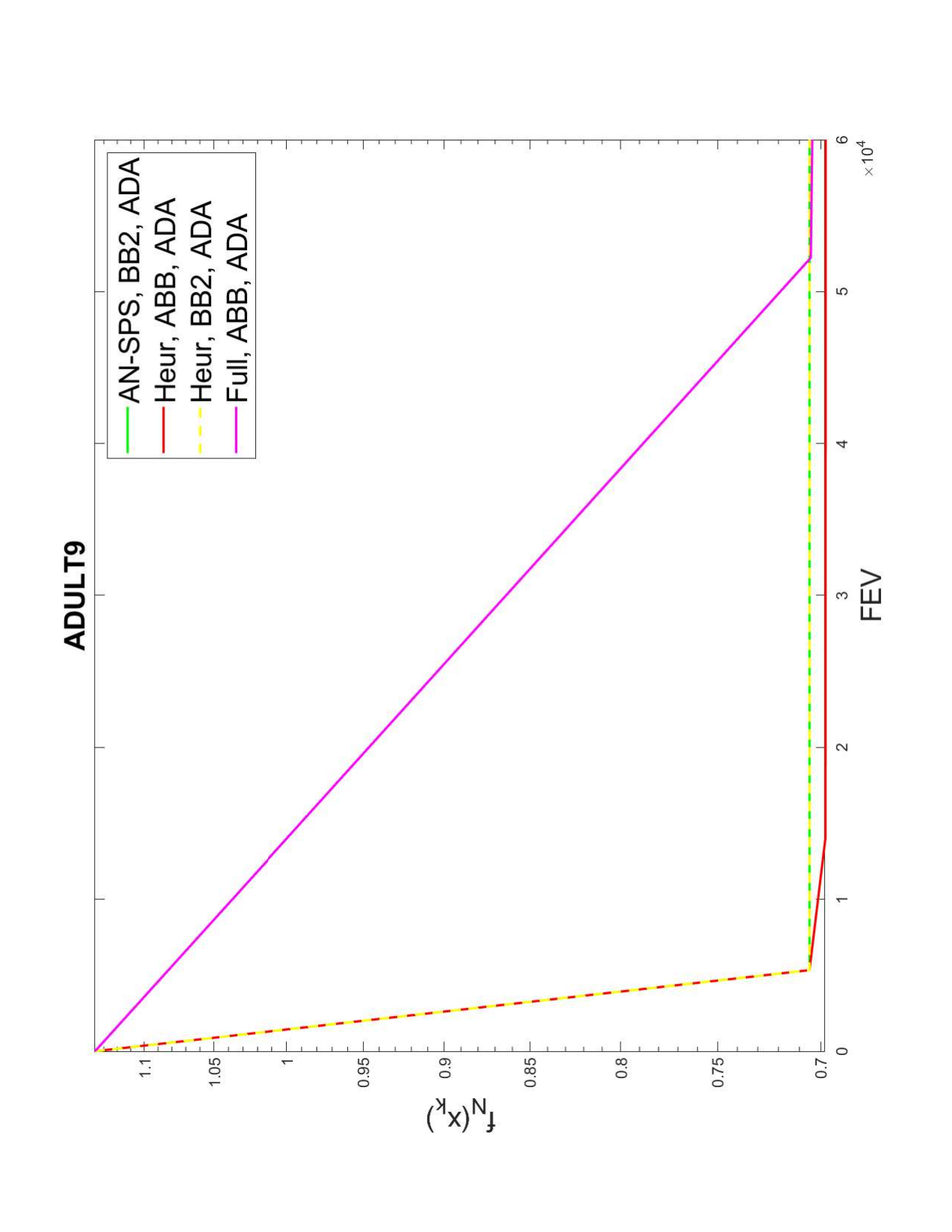}
   \includegraphics[width=0.4\textwidth,angle = 270]{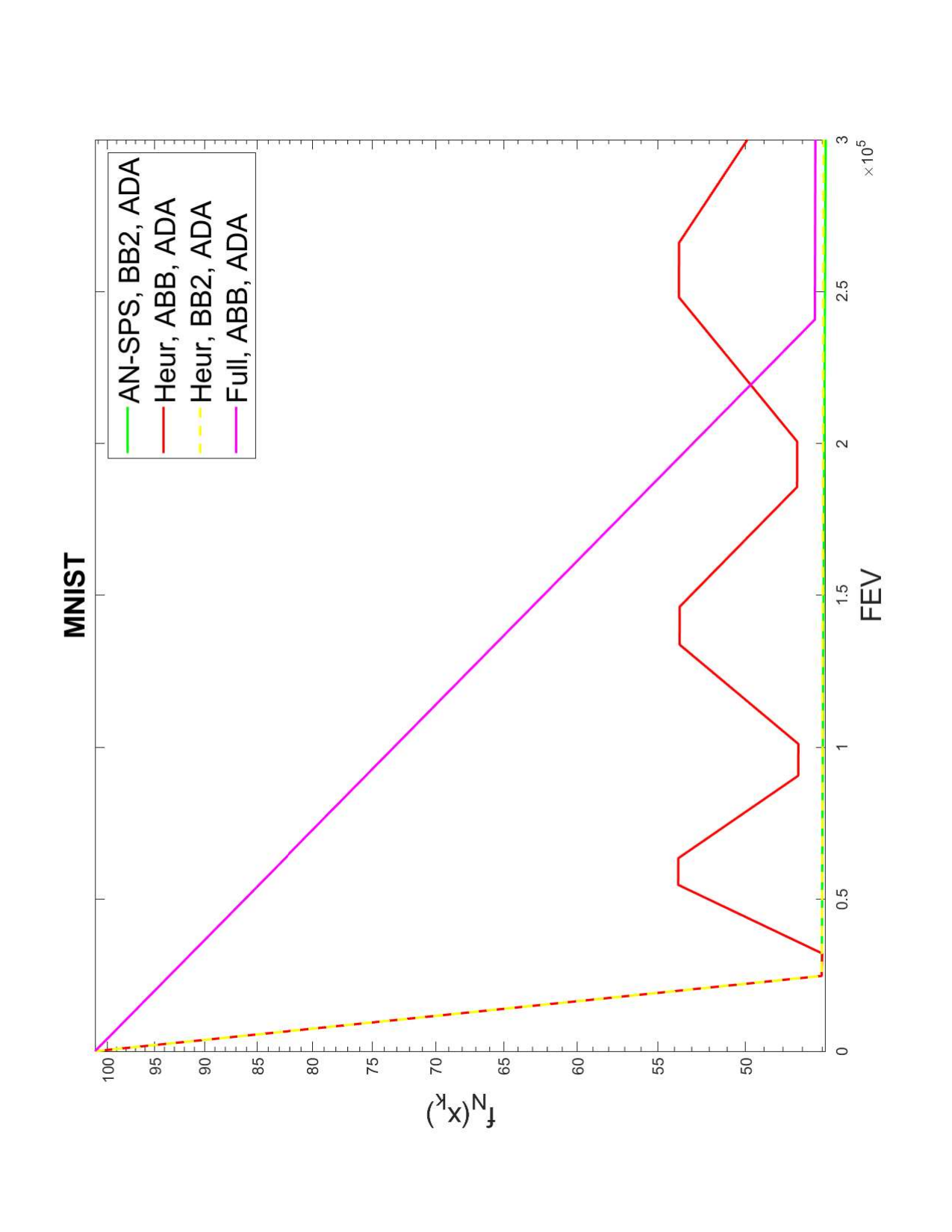}\\
    \caption{{\footnotesize{Comparison of   AN-SPS, HEUR, and FULL sample size strategies on convex problems without the $L_2$-regularization part.
        }} }\label{faza3_dodatno}
\end{figure}

\newpage
\section{Conclusions}\label{sec5}

We provide an adaptive sample size algorithm for constrained nonsmooth convex optimization problems, where the objective function is in the form of mathematical expectation, and the feasible set allows exact projections. This method allows an arbitrary (negative) subgradient direction related to the SAA function, which is further scaled and multiplied by the spectral coefficient. The coefficient can be defined in various ways and the only theoretical requirement is to keep it bounded away from zero and infinity which can be accomplished by using the standard safeguard rule. Scaling is important from a theoretical point of view since it helps us to avoid boundedness assumptions in the convergence analysis.  We proved that the method pushes the sample size to infinity and ensures that the SAA error tends to zero. On the other hand, a numerical study on Hinge loss problems showed that the adaptive strategy is efficient in terms of computational costs. Moreover, we proved that the almost sure convergence toward a solution of the original problem is attained under common assumptions in a stochastic environment. Furthermore, in the finite sum case, the convergence is deterministic and is achieved under reduced assumptions.  Moreover, we provide the worst-case complexity analysis for this case.  Since spectral coefficients are employed, we propose a nonmonotone line search over predefined intervals, although the monotone line search rule is eligible from a theoretical point of view. The numerical study also examined the performance of different line search rules and spectral coefficients. The preliminary results provide some hints for future work that may include adaptive nonmonotone strategies and inexact projections. 

{\bf{Acknowledgement.}} We are grateful to the associate editor and two anonymous referees whose comments helped us improve the paper.

\section*{Funding} The work of Nata\v sa Krklec Jerinki\' c sa has been supported by Provincial Secretariat for Higher Education and Scientific Research of Vojvodina, grant no. 142-451-2593/2021-01/2. The work of Tijana Ostoji\' c has been supported by the Ministry of Science, Technological Development and Innovation (Contract No. 451-03-65/2024-03/200156) and the Faculty of Technical Sciences, University of Novi Sad through project “Scientific and Artistic Research Work of Researchers in Teaching and Associate Positions at the Faculty of Technical Sciences, University of Novi Sad” (No. 01-3394/1).\\
{\bf{Availability statement.}} The datasets analyzed during the current study are available in the MNIST database
of handwritten digits \cite{MNIST},  LIBSVM Data: Classification (Binary Class) \cite{SPLiADL} and UCI Machine Learning Repository \cite{MUSH}.

{\bf{Disclosure statement}}\\

{\bf {Conflict of interest.}} The authors declare no competing interests.

\section{Appendix}\label{sec6}
 Recall that $X^*$ and $f^*$ are the set of solutions and the optimal value of problem  \eqref{prob1}, respectively.  

{\bf{Proof of Proposition \ref{teoBounded}.}}

\begin{proof} 
Let $x^*$ be an arbitrary solution of the problem \eqref{prob1}. Following the steps of \eqref{e1} and the definition  \eqref{ebar} we obtain for all $k=0,1,\ldots$ 
\begin{eqnarray}
\label{e12}
||x_{k+1}-x^*||^2 &=& ||P_{\Omega}(z_{k+1})-P_{\Omega}(x^*)||^2\\ \nonumber
&\leq & 
||x_{k}-x^*||^2+2\alpha_k \frac{\zeta_k}{q_k} (f_{\mathcal{N}_{k}}(x^*)-f_{\mathcal{N}_{k}}(x_k))+ \alpha_k^2 \bar{\zeta}^2\\ \nonumber
&\leq &
||x_{k}-x^*||^2+ 2\alpha_k \frac{\zeta_k}{q_k} (f(x^*)-f(x_k)+\bar{e}_k)+\alpha_k^2 \bar{\zeta}^2
\\ \nonumber
&\leq &
||x_{k}-x^*||^2+ 2\alpha_k \frac{\zeta_k}{q_k} (f(x^*)-f(x_k))+2\alpha_k \frac{\zeta_k}{q_k}\bar{e}_k+\alpha_k^2 \bar{\zeta}^2
\\ \nonumber
&\leq &
||x_{k}-x^*||^2+ 2\alpha_k \bar{\zeta}  \bar{e}_k+\alpha_k^2 \bar{\zeta}^2,
\end{eqnarray}
where we use the fact that $x_k $ is feasible and thus $f(x^*)-f(x_k)\leq 0$ and that $q_k \geq 1$. 
Further, by the induction argument and the fact that $\alpha_k \leq C_2 /k$ we obtain
$$ ||x_{k}-x^*||^2\leq ||x_{0}-x^*||^2+2 C_2 \overline{\zeta} \sum_{k=0}^{\infty} \frac{\bar{e}_k}{k}  + \overline{\zeta}^2  \sum_{k=0}^{\infty} \frac{C^2_2}{k^2}\leq C_5 <\infty .$$
This completes the proof. 
\end{proof}

{\bf{Proof of Theorem \ref{teoAS}}}

\begin{proof} First, notice that Theorem \ref{teoNk}
implies that $\lim_{k \to \infty} N_k=\infty$ in ubounded sample case.  Moreover, Proposition  \ref{teoBounded} implies that $\lbrace x_k \rbrace \subseteq \bar{\Omega}$. Furthermore, Assumption A\ref{pp_Lip} implies that for any $\mathcal{N}$ we have locally $L_{x}$-Lipschitz continuous function $f_{\mathcal{N}}(x)$. Thus, there exists a constant $L$ such that $f_{\mathcal{N}}$ is $L$-Lipschitz continuous on $\bar{\Omega}$ for any $\mathcal{N}$. This further implies that $\|\bar{g}_k\|\leq L$ for each $k$ and 
\begin{equation} \label{qkb}
    1\leq q_k \leq \max\{1, L\}:=\bar{q}. 
\end{equation}

Denote by $\mathcal{W}$ the set of all possible sample paths of AN-SPS algorithm. First we prove that  \begin{equation} \label{prvideo}
    \liminf_{k \to \infty} f(x_k)=f^* \quad \mbox{a.s.,}
\end{equation}
 where $f^*=\inf_{x \in \Omega} f(x).$ 
Suppose that $\liminf_{k \to \infty} f(x_k)=f^*$ does not happen with probability 1.
 In that case there exists a subset  of sample paths $\tilde{\mathcal{W}}\subseteq \mathcal{W}$ such that $ P(\tilde{\mathcal{W}})>0$ and 
for every $w \in \tilde{\mathcal{W}} $ there holds 
$$\liminf_{k\rightarrow \infty} f(x_k(w))> f^*,$$ 
i.e., there exists $\varepsilon(w) >0$ small enough  such that  
$f(x_k(w))-f^* \geq 2\varepsilon(w) $ for all $k$. 
   Since $f$ is assumed to be continuous and bounded from below on $\Omega$, $f^*$ is finite and we conclude that there exists a point $\tilde{y}(w)\in \Omega$ such that
$f(\tilde{y}(w))<f^*+\varepsilon(w).$
This further implies 
$$f(x_k(w))-f(\tilde{y}(w))> f(x_k(w))-f^*-\varepsilon(w) \geq 2\varepsilon(w)-\varepsilon(w)=\varepsilon(w).$$
Let us take an arbitrary $w \in \tilde{\mathcal{W}}.$ Denote $z_{k+1}(w):=x_k(w)+\alpha_k(w) p_k(w)$. 
Notice that nonexpansivity of orthogonal projection and the fact that $\tilde{y} \in \Omega$ together imply  
\begin{equation}
||x_{k+1}(w)-\tilde{y}(w)||=||P_{\Omega}(z_{k+1}(w))-P_{\Omega}(\tilde{y}(w))||\leq||z_{k+1}(w)-\tilde{y}(w)||.
\label{nej_proj}
\end{equation}
 Using \eqref{qkb} and the fact that $ \bar{g}_{k} $  is subgradient of convex function $ f_{\mathcal{N}_{k}} $, i.e.,  $ \bar{g}_{k}\in  \partial f_{\mathcal{N}_{k}}(x_k) $, we have $f_{\mathcal{N}_k}(x_k)-f_{\mathcal{N}_k}(\tilde{y})\leq \overline{g}^T_k(x_k-\tilde{y})$. Dropping $w$ in order to facilitate the reading and defining 
 $$ e_k:=|f_{{\cal{N}}_k}(\tilde{y})-f(\tilde{y})|+\max_{x \in \bar{\Omega}}|f_{{\cal{N}}_k}(x)-f(x)|,$$ 
 we obtain 
\begin{eqnarray}
||z_{k+1}- \tilde{y}||^2 
&=&\nonumber
||x_{k}+\alpha_k p_k-\tilde{y}||^2 = ||x_{k}-\alpha_k \zeta_k v_k-\tilde{y}||^2 \\ \nonumber
&=& 
||x_{k}-\tilde{y}||^2-2\alpha_k \zeta_k\frac{\overline{g}^T_k}{q_k} \left(x_{k}-\tilde{y}\right) + \alpha_k^2 \zeta^2_k ||v_{k}||^2 \\ \nonumber
&\leq & 
||x_{k}-\tilde{y}||^2+2\alpha_k \frac{\zeta_k}{q_k} (f_{\mathcal{N}_{k}}(\tilde{y})-f_{\mathcal{N}_{k}}(x_k))+ \alpha_k^2 \zeta_k^2 \\ \nonumber
&\leq &
||x_{k}-\tilde{y}||^2 +2\alpha_k \frac{\zeta_k}{q_k}(f(\tilde{y})-f(x_k)+e_k) +\alpha_k^2 \zeta_k^2\\ \nonumber
&\leq&
||x_{k}-\tilde{y}||^2-2\alpha_k\frac{\zeta_k}{q_k}(f(x_k)-f(\tilde{y}))+2e_k\alpha_k \overline{\zeta}+\alpha_k^2\overline{\zeta}^2\\ \nonumber
&\leq& 
||x_{k}-\tilde{y}||^2-2 \alpha_k\frac{\underline{\zeta}}{\bar{q}}\varepsilon+2e_k\alpha_k\overline{\zeta}+\alpha_k^2\overline{\zeta}^2\\ 
&=&
||x_{k}-\tilde{y}||^2-\alpha_k \left(2 \frac{\underline{\zeta}}{\bar{q}}\varepsilon-2e_k\overline{\zeta} -\alpha_k\overline{\zeta}^2\right),
\label{dod1}
\end{eqnarray}
Since, $\lbrace x_k \rbrace \subseteq \bar{\Omega}$, ULLN under the stated assumptions implies   
 $ \lim_{k \rightarrow \infty} e_k(w) =0  $ for almost every $w \in \mathcal{W}$. Since $P(\tilde{\mathcal{W}})>0$, there must exist a sample path  $\tilde{w}\in \tilde{\mathcal{W}}$ such that $$\lim_{ k\rightarrow \infty} e_k(\tilde{w}) =0.$$ 
This further implies the existence of  $\tilde{k}(\tilde{w})\in \mathbb{N}$ such that for all $k\geq \tilde{k}(\tilde{w})$ we have 
\begin{equation}
\alpha_k (\tilde{w}) \overline{\zeta}^2+2e_k (\tilde{w}) \overline{\zeta}\leq  \varepsilon(\tilde{w}) \frac{\underline{\zeta}}{\bar{q}}
\label{nejednakost}
\end{equation}
because Step S2 of AN-SPS algorithm implies that $\lim_{k \to \infty } \alpha_k=0$ for any sample path.
Furthermore, since \eqref{dod1} holds for all $w \in \tilde{\mathcal{W}}$ and thus for $\tilde{w}$ as well, from \eqref{nej_proj}-\eqref{nejednakost} we obtain
$$||x_{k+1}(\tilde{w})-\tilde{y}(\tilde{w})||^2\leq ||z_{k+1}(\tilde{w})-\tilde{y}(\tilde{w})||^2\leq||x_{k}(\tilde{w})-\tilde{y}(\tilde{w})||^2-\alpha_k(\tilde{w}) \varepsilon(\tilde{w}) \frac{\underline{\zeta}}{\bar{q}}$$
and
$$||x_{k+s}(\tilde{w})-\tilde{y}(\tilde{w})||^2\leq ||x_{k}(\tilde{w})-\tilde{y}(\tilde{w})||^2- \varepsilon (\tilde{w})\frac{\underline{\zeta}}{\bar{q}} \sum_{j=0}^{s-1}\alpha_j(\tilde{w}).$$ 
Letting $s \to \infty$ yields a contradiction since $\sum_{k=0}^{\infty} \alpha_k \geq \sum_{k=0}^{\infty} 1 /k =\infty$ for any sample path and  we conclude that \eqref{prvideo}  holds.

Now, let us prove that \begin{equation}
\label{t1b}
\lim_{k\rightarrow\infty} x_k=x^*  \quad \text{a.s.}
\end{equation}  
Since \eqref{prvideo} holds, we know that 
\begin{equation}
\liminf_{k\rightarrow\infty} f(x_k(w))=f^*,
\label{liminf}
\end{equation}
for almost every $w \in \mathcal{W}$. In other words, there exists $\overline{\mathcal{W}} \subseteq \mathcal{W}$ such that $P(\overline{\mathcal{W}})=1$ and \eqref{liminf} holds for all $w \in \overline{\mathcal{W}}$. Let us consider arbitrary $w \in \overline{\mathcal{W}}$. We will show that $\lim_{k\rightarrow\infty} x_k(w)=x^*(w) \in X^*$ which will imply the result \eqref{t1b}.  Once again let us  drop $w$ to facilitate the notation. 
%
Let $K_1\subseteq\mathbb{N}$ be a subsequence of iterations such that
$$\lim_{k\in K_1}f(x_k)=f^*.$$
Since $\lbrace{ x_k \rbrace}_{k\in K_1}\subseteq \lbrace{ x_k \rbrace}_{k\in\mathbb{N}}$ and $\lbrace{ x_k \rbrace}_{k\in\mathbb{N}}$ is bounded, there exist $K_2\subseteq K_1$ and $\tilde{x}$ such that
\begin{equation} 
\label{rec2} \lim_{k\in K_2} x_k=\tilde{x}.
\end{equation}
Then, we have
$$f^*=\lim_{k\in K_1}f(x_k)=\lim_{k\in K_2}f(x_k)=f(\lim_{k\in K_2} x_k)=f(\tilde{x}).$$
Therefore, $f(\tilde{x})=f^*$ and we have $\tilde{x}\in X^*$. Now, we show that the whole sequence of iterates converges. 
Let $\lbrace{ x_k \rbrace}_{k\in K_2}:=\lbrace{ x_{k_i} \rbrace}_{i\in\mathbb{N}}.$ Following the steps of   \eqref{e12} and using the fact that $f(x_k)\geq f(\tilde{x})$ for all $k$, we obtain that the following holds for any $s\in\mathbb{N}$

\begin{eqnarray} 
||x_{k_i+s}- \tilde{x}||^2  
&\leq &
||x_{k_i}-\tilde{x}||^2+2\overline{\zeta}\sum_{j=0}^{s-1}\bar{e}_{k_i+j}\alpha_{k_i+j}+\overline{\zeta}^2\sum_{j=0}^{s-1}\alpha_{k_i+j}^2\\\nonumber
&\leq &
||x_{k_i}-\tilde{x}||^2+2\overline{\zeta}\sum_{j=0}^{\infty}\bar{e}_{k_i+j}\alpha_{k_i+j}+\overline{\zeta}^2\sum_{j=0}^{\infty}\alpha_{k_i+j}^2\\\nonumber
&=&
||x_{k_i}-\tilde{x}||^2+2\overline{\zeta}\sum_{j=k_i}^{\infty}\bar{e}_j\alpha_j+\overline{\zeta}^2\sum_{j=k_i}^{\infty}\alpha_j^2 =:a_i.
\end{eqnarray}
 Moreover, for any $s, m \in\mathbb{N}$ there holds 
$$||x_{k_i+s}- x_{k_i+m}||^2\leq 2 ||x_{k_i+s}- \tilde{x}||^2+2 ||x_{k_i+m}- \tilde{x}||^2\leq 4 a_i.$$
Due to the fact that  $\sum_{j=k_i}^{\infty}\bar{e}_j\alpha_j$ and $\sum_{j=k_i}^{\infty}\alpha_j^2$ are the residuals of convergent sums, and that \eqref{rec2} holds, we conclude that $$\lim_{i\to\infty}a_i=0.$$ 
Thus, we have just proved that $\{x_k\}_{k \in \mathbb{N}}$ is a Cauchy sequence and thus convergent, which together with \eqref{rec2} implies that $\lim_{k \to \infty} x_k=\tilde{x}$.
\end{proof}

\end{document}